\documentclass[11pt,twoside]{amsart}

\usepackage{prefix}
\newcommand{\mE}{\mathcal{E}}
\DeclareMathOperator{\supp}{supp}

\title{Completing shellings with $d-2$ extra vertices}

\author{Suho Oh}
\address{Department of Mathematics, Texas State University, 601 University Dr, San Marcos, TX 78666}
\email{suhooh@txstate.edu}
\keywords{simplicial complex, shellability, extendable shellability, Simon's conjecture, $k$-decomposable complex, vertex decomposable complex}
\subjclass[2020]{Primary 05E45; Secondary 52B22, 13F55}

\begin{document}
\begin{abstract}
Simon's conjecture (1994) asserts that any pure $d$-dimensional shellable complex on $n$ vertices can be extended to the $d$-skeleton of the simplex on $n$ vertices, one facet at a time, while maintaining shellability. Bolognini and Sentinelli (2026) recently disproved it for every $d \geq 3$. We show that the conjecture becomes true once $d-2$ new vertices are allowed, and that the same holds with $k$-decomposability in place of shellability for any $k \geq 1$. We also show that the number $d-2$ cannot be lowered. Inflating the counterexample of Bolognini and Sentinelli gives, for every $d \geq 3$, a $1$-decomposable complex that cannot be extended in this way with only $d-3$ new vertices, even while merely maintaining shellability.
\end{abstract}
\maketitle

\section{Introduction}
\label{sec:intro}

A pure simplicial complex $\Delta$ is termed \emph{shellable} if one can order its facets $F_1, F_2, \dots, F_s$ so that each $F_i$ intersects the preceding facets in a pure complex of codimension one. Shellability serves as a vital combinatorial method with implications for both the topology of $\Delta$ and the algebraic properties of its Stanley--Reisner ring: a shellable complex is Cohen--Macaulay over every field, connected in codimension one, and has the homotopy type of a wedge of spheres. Notable instances of shellable simplicial complexes include independence complexes of matroids \cite{pb80}, boundary complexes of simplicial polytopes \cite{bm72}, and the skeleta of shellable complexes \cite{bw96}. Specifically, the $k$-skeleton of a simplex with vertex set $[n]$ is shellable for any $k = 1, 2, \dots, n-1$. Following \cite{bs26}, we denote it by $\Sk{k}{n}$.

When working with a shellable complex, one might naturally wonder if it is possible to encounter obstacles while constructing a shelling sequence. A shellable complex $\Delta$ is termed \emph{extendably shellable} if every shelling of a subcomplex of $\Delta$ can be expanded to form a shelling of $\Delta$ itself. In this context, a subcomplex of $\Delta$ refers to a simplicial complex whose facets form a subset of the facets of $\Delta$. Since every pure $d$-dimensional complex on $n$ vertices is a subcomplex of $\Sk{d}{n}$, the following question is central: which shellable complexes can be extended to $\Sk{d}{n}$ one facet at a time while maintaining shellability? Following \cite{cdgo22}, such complexes are called \emph{shelling completable}.

Shellable complexes are naturally found in various scenarios, but it appears that extendably shellable complexes are less prevalent. Some known examples are every $2$-dimensional triangulated sphere \cite{dk78}, rank $3$ matroid independence complexes \cite{be94} and all shellable $d$-complexes on $d+3$ vertices \cite{k77,cdgs20}. Björner and Eriksson \cite{be94} conjectured that the independence complex of every matroid is extendably shellable, but Hall \cite{hall04} found a counterexample. Ziegler \cite{z98} showed that some simplicial $4$-polytopes are not extendably shellable. In addition, there are $2$-dimensional complexes on $6$ vertices that are not extendably shellable \cite{mt03}. Hachimori \cite{Hachimori2000} gave an example of a shellable pure $2$-dimensional complex on $7$ vertices with a facet that must come last in any shelling order. Wachs \cite{w99} studied obstructions to shellability, and Adiprasito, Benedetti and Lutz \cite{AdiprasitoBenedettLutz2017} showed that for each $d \geq 2$ there exist shellable $d$-complexes with only one free face. The main motivation behind this paper and its predecessor \cite{ghkklotw25} is a question posed by Simon \cite{s94}. See also \cite[Ch.~III]{Stanley96} and \cite[Exercise 8.24(iii)]{z95}.

\begin{conjecture}[Simon's Conjecture \cite{s94}]
The complex $\Sk{d}{n}$ is extendably shellable.
\end{conjecture}

Some small cases have been resolved: the $d \leq 2$ case \cite{be94} and the $d \geq n-3$ case \cite{bpz19,d21,cdgs20}. Bigdeli, Yazdan Pour and Zaare-Nahandi \cite{bpz19} proposed a stronger conjecture implying Simon's, for which Benedetti and Bolognini \cite{bb21} found an infinite family of counterexamples. We refer the reader to \cite{dgl22,f25,e96} for related questions and conjectures. Stanley remarked that the conjecture is ``probably false'' \cite[p.~84]{Stanley96}, and indeed Bolognini and Sentinelli \cite{bs26} recently disproved it: for every $d \geq 3$ they constructed a pure $d$-dimensional shellable complex that is not shelling completable, the first being a $3$-dimensional complex on $16$ vertices with $280$ facets. Their counterexample is \emph{austere}: every missing facet contains two minimal nonfaces, so no facet at all can be added to it while maintaining shellability.

In light of this disproof, the question becomes how much room a shellable complex needs in order to be completable. There are two natural ways to relax Simon's conjecture: one can restrict the class of complexes, or one can allow new vertices, that is, ask for an extension to $\Sk{d}{m}$ for some $m > n$. Both were pursued in earlier work through the notion of $k$-decomposable complexes introduced by Provan and Billera \cite{pb80}, which interpolates between vertex decomposable complexes and shellable complexes: for $k=0$ it is vertex decomposability and for $k=d$ it is shellability. In \cite{cdgo22} it was shown that Simon's conjecture holds for $0$-decomposable complexes, with no new vertices needed.

\begin{theorem}[\cite{cdgo22}]
Given any pure $d$-dimensional $0$-decomposable simplicial complex $\Delta$ on $n$ vertices, we can extend $\Delta$ to $\Sk{d}{n}$ adding one facet at a time, while maintaining $0$-decomposability.
\end{theorem}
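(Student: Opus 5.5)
The plan is to argue by induction on $d$ and on the number of vertices. We use the standard recursive description of $0$-decomposability: a pure $d$-dimensional complex $\Delta$ is $0$-decomposable if it is a simplex, or if it has a \emph{shedding vertex} $v$. This means that $\mathrm{del}_\Delta(v)$ is pure $d$-dimensional and $0$-decomposable, and that $\mathrm{lk}_\Delta(v)$ is pure $(d-1)$-dimensional and $0$-decomposable.

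To make the induction run, I will prove a slightly stronger statement. For any finite set $V$ containing the vertex set of $\Delta$, with $|V| \geq d+1$, we can extend $\Delta$ to the $d$-skeleton of the simplex on $V$ one facet at a time while maintaining $0$-decomposability. The theorem is the case $V=[n]$. The base cases are $d=0$, which is trivial since every $0$-dimensional complex is $0$-decomposable, and $|V|=d+1$, where $\Delta$ is already the full simplex.

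\textbf{Simplex case.} Suppose $\Delta$ is a single facet $F$ and $|V|>d+1$. Choose $x\in F$ and $y\in V\setminus F$, and add the facet $F\setminus\{x\}\cup\{y\}$. The resulting two-facet complex is $0$-decomposable with shedding vertex $x$. Indeed, its deletion is the simplex $F\setminus\{x\}\cup\{y\}$ and its link is the simplex $F\setminus\{x\}$, and both are pure of the right dimensions. This reduces the simplex case to the general case below.

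\textbf{General case.} Let $v$ be a shedding vertex of $\Delta$, and write
\[
\Delta = D \cup (v * L), \qquad D=\mathrm{del}_\Delta(v),\quad L=\mathrm{lk}_\Delta(v).
\]
Here $L\subseteq D$ as complexes, because $D$ is pure $d$-dimensional and every face of $L$ is a face of $\Delta$ not containing $v$.

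\emph{Phase 1.} Apply induction (fewer vertices) to $D$ with ambient set $V\setminus\{v\}$. This gives facets $G_1,\dots,G_r$ such that each $D_i=D\cup\{G_1,\dots,G_i\}$ is $0$-decomposable and $D_r$ is the $d$-skeleton on $V\setminus\{v\}$. We add $G_1,\dots,G_r$ to $\Delta$ in this order, obtaining $\Delta_i = D_i\cup (v*L)$. In each $\Delta_i$ the vertex $v$ is still a shedding vertex. The deletion is $D_i$, which is pure $d$-dimensional and $0$-decomposable. The link is still $L$, since the new facets avoid $v$, and $L$ is still contained in the deletion.

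\emph{Phase 2.} Apply induction (smaller dimension) to $L$ with ambient set $V\setminus\{v\}$. This gives $(d-1)$-faces $H_1,\dots,H_t$ extending $L$ to the $(d-1)$-skeleton on $V\setminus\{v\}$ through $0$-decomposable complexes. We add the facets $v\cup H_j$ in order. In each intermediate complex, the deletion of $v$ is the full $d$-skeleton on $V\setminus\{v\}$. This deletion is $0$-decomposable, for example by the induction hypothesis applied to any single facet, and it contains every $(d-1)$-face. The link of $v$ is $L\cup\{H_1,\dots,H_j\}$, which is $0$-decomposable. So $v$ remains a shedding vertex throughout. At the end we obtain the full $d$-skeleton on $V$.

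\textbf{Expected obstacle.} The step I expect to need the most care is the bookkeeping in the recursive definition. First, the deletion must remain pure of full dimension $d$ at every stage; this is why Phase 1 comes before Phase 2. Second, the ambient vertex sets of the recursive calls must be handled correctly, since the deletion or link may use fewer vertices than $V\setminus\{v\}$; this is why the statement is generalized to an arbitrary ambient set $V$. Third, when $\Delta$ is a simplex there is no shedding vertex with full-dimensional deletion, which is why that case is treated separately above.
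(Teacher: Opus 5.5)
Your proof is correct. The paper does not prove this statement itself: it is quoted from \cite{cdgo22} and used only as a black box (\cref{thm:cdgo}) in the last step of the proof of \cref{thm:main}.

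Your argument is, in substance, the recursion behind \cite{cdgo22}. You take a shedding vertex $v$ and first complete its deletion to the full $d$-skeleton on $V\setminus\{v\}$, so that the deletion stays pure. Only then do you complete its link. The ambient set $V$ is carried along so that deletions and links with fewer vertices can be fed back into the induction. In \cite{cdgo22} the same choice is packaged as adding the revlex-smallest missing $(d+1)$-subset for a suitable ordering of the ground set. With a shedding vertex placed last, revlex lists every set avoiding it first, which matches your Phase 1 before Phase 2.

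Two of your justifications can be simplified, though neither is a gap. First, $L \subseteq D$ holds simply because the deletion consists of all faces of $\Delta$ avoiding $v$; purity plays no role there. Second, in Phase 2 the deletion is $D_r$, which is already $0$-decomposable by the end of Phase 1, so there is no need to invoke the induction hypothesis on a single facet.
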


In \cite{ghkklotw25}, the analogous statement was shown for $1$-decomposable complexes at the cost of $d-2$ new vertices: any pure $d$-dimensional $1$-decomposable complex on $n$ vertices can be extended to $\Sk{d}{n+d-2}$ one facet at a time while maintaining $1$-decomposability. Two questions were left open there: whether the $d-2$ new vertices are really needed, and whether the same holds for $k$-decomposable complexes for larger $k$. In this paper we answer both. Our first main result is that $d-2$ new vertices suffice for every $k$, and in particular for all shellable complexes.

\theoremstyle{plain}
\newtheorem*{thmmain}{Theorem~\ref{thm:main}}
\begin{thmmain}
Let $d \geq 2$ and $k \geq 1$. Given any pure $d$-dimensional $k$-decomposable simplicial complex $\Delta$ on $n$ vertices, we can extend $\Delta$ to $\Sk{d}{n+d-2}$ one facet at a time, while maintaining $k$-decomposability.
\end{thmmain}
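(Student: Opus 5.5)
The plan is to induct on $d$. Each drop in dimension should cost exactly one new vertex, so that the base case $d=2$ costs none and dimension $d$ costs $d-2$. We use the recursive definition of $k$-decomposability throughout: $\Delta$ is $k$-decomposable if it is a simplex, or if it has a shedding face $\sigma$ with $\dim\sigma\le k$ such that both $\operatorname{lk}_\Delta\sigma$ and $\Delta\setminus\sigma$ are $k$-decomposable of the expected dimensions. Since $k\ge 1$, we may always use vertices and edges as shedding faces, which is what makes the argument uniform in $k$.

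\textbf{Step 1: a criterion for adding one facet.} We first record when adding a single facet $F$ to a $k$-decomposable $\Gamma$ keeps $k$-decomposability. If $F$ contains a vertex $w$ not in $\Gamma$, then $w$ is a shedding vertex of $\Gamma\cup\{F\}$. Its deletion is $\Gamma$, and its link is the simplex $F\setminus w$, so adding $F$ is free. More generally, suppose there is a face $\tau\subseteq F$ of dimension at most $k$ with $\tau\in\Gamma$. Then $\tau$ serves as a shedding face for $\Gamma\cup\{F\}$, provided two things hold: $(\Gamma\cup\{F\})\setminus\tau$ is $k$-decomposable, and the new link $\operatorname{lk}_\Gamma\tau\cup\{F\setminus\tau\}$ is $k$-decomposable. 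This reduces adding facets to the same problem one dimension lower, applied to the link of a vertex $v\in F$, so that the induction hypothesis applies there.

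\textbf{Step 2: base case $d=2$.} We need the $d=2$ case with no new vertices, for $k$-decomposability with $k\ge1$. For $k\ge 2$ this is exactly shellability, which is the known $d\le 2$ case of Simon's conjecture \cite{be94}. For $k=1$ we must check directly that a completion order can be chosen whose shedding faces have size at most $2$. We expect a greedy argument on $2$-complexes to work here: repeatedly add a missing triangle sharing an edge with the current complex whose third vertex's link stays connected.

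\textbf{Step 3: inductive step.} Let $d\ge3$, let $\Delta$ be $k$-decomposable on $[n]$, and let $w$ be a new vertex. We build the extension in two phases.
\begin{enumerate}
\item First we add the facets $\{w\}\cup G$, where $G$ runs through the facets of $\Sk{d-1}{n}$ in a vertex-decomposable order. At each stage $w$ is a shedding vertex: its deletion is $\Delta$ and its link is a shellable, indeed vertex decomposable, initial segment, so $k$-decomposability is maintained.
\item Then we add the missing facets $F\subseteq[n]$ one vertex-link at a time. We fix a shedding order $v_1,v_2,\dots$ of $\Delta$ coming from its decomposition. For the current vertex $v$, we use the induction hypothesis, with $d-3$ further new vertices, to extend $\operatorname{lk}v$ to the full $(d-1)$-skeleton on its vertex set. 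We then lift each added facet $G$ of the link to $G\cup\{v\}$, using Step 1 with $\tau=\{v\}$. The deletion $\Delta\setminus v$ is unchanged while the link grows, so both stay $k$-decomposable.
\end{enumerate}
The cone over $w$ added in phase (1) is what guarantees that every missing facet $F$ contains a vertex whose link sees all of $\partial F$. It also guarantees that the new vertices used in the recursive calls can be shared across the different links, so that the total number of new vertices stays at $1+(d-3)=d-2$. Finally, once all facets are present, we add the facets meeting the new vertices by the free move of Step 1.

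\textbf{Main obstacle.} The hardest point is the bookkeeping in phase (2). Lifting facets into $\operatorname{lk}v$ changes the links of the other vertices $v_j$ as well, so we must ensure that the deletions used later remain $k$-decomposable. We also need the recursive calls for different vertices to reuse the same $d-3$ extra vertices rather than consume fresh ones. To handle this, we would first try processing the vertices in reverse shedding order, and proving a strengthened inductive statement: the extension can be chosen so that the link of every original face stays $k$-decomposable throughout.
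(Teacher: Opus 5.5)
There is a genuine gap, and it lies in the basic mechanism, not only in the bookkeeping you flag. Your Step~1 drops the purity condition in the definition of a shedding face. By the gluing criterion (\cref{lem:shedd}), a vertex $x$ can serve as the shedding face that admits a new facet $\{x\} \cup G$ only if the $d$-set $G$ is already a face of the complex.

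In phase~(1), take any $G$ that is a $d$-subset of $[n]$ but not a face of $\Delta$. The deletion of $w$ then contains $G$ as a maximal $(d-1)$-dimensional face, so it is neither $\Delta$ nor pure, and $w$ is not gluable. If such a $G$ comes first, $\{w\} \cup G$ is adjacent to no facet of $\Delta$, and the complex is not even shellable. So the only facets you can add this way are the $\{w\} \cup G$ with $G \in \skel{d-1}(\Delta)$, i.e.\ the coning $\skel{d}(\Delta * w)$ of \cref{lem:conekdec}. The property that phase~(1) was supposed to provide is therefore lost.

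The same defect breaks the lifting in phase~(2). Adding $G \cup \{v\}$ puts $G$ into the deletion of $v$, so ``the deletion is unchanged'' holds only if $G$ was already a face. The facets that the inductive call adds to the link of $v$ generally are not faces yet, for instance any facet containing an extra vertex that has not appeared anywhere. The ``free move'' of Step~1 has the same problem. A facet $F$ with a new vertex $w$ can be added with $w$ as shedding vertex only if $F \setminus \{w\}$ is already a face. If $F \setminus \{w\} \notin \Gamma$, or if $F$ has two new vertices, then $F$ is adjacent to no facet at all.

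Beyond that, the points you list as the main obstacle are where the proof would have to happen, and they remain open. The induction hypothesis as stated cannot be invoked with shared extra vertices: once an extra vertex has been lifted into facets through $v$, it is an old vertex of the link of a later vertex $v'$, and the hypothesis would then demand $d-3$ vertices beyond it. The $k=1$, $d=2$ base case is also only conjectured.

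The paper avoids induction on $d$ altogether. For $k=1$ it cites \cite[Theorem 4.10]{ghkklotw25}. For $k \geq 2$ it proceeds as follows:
\begin{enumerate}
\item Cone $\Delta$ by all $d-2$ new vertices at once, only over faces of $\Delta$.
\item Enlarge the base one triangle $uvw$ at a time, with the missing edge $uw$ or the triangle itself as shedding face and a skeleton of a simplex as link (\cref{lem:addtriangle}). This uses connectivity of $\skel{1}(\Delta)$ and $k \geq 2$, and continues until the base is $2$-full.
\end{enumerate}
The idea your plan is missing is \cref{lem:conedfullVD}: a coned, $2$-full complex with $|A| \geq d-2$ is automatically vertex decomposable. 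Then \cref{thm:cdgo} completes it to $\Sk{d}{n+d-2}$ with no vertex-by-vertex bookkeeping.
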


Taking $k = d$, every pure $d$-dimensional shellable complex on $n$ vertices can be extended to $\Sk{d}{n+d-2}$ one facet at a time while maintaining shellability. In other words, Simon's conjecture becomes true once $d-2$ extra vertices are allowed. Our second main result is that this is the exact price: the counterexample of Bolognini and Sentinelli, once each vertex of its $8$-vertex base is replaced by $d-1$ copies, shows that $d-3$ extra vertices are not enough even for $1$-decomposable complexes.

\newtheorem*{thmtight}{Theorem~\ref{thm:tight}}
\begin{thmtight}
For every $d \geq 3$ there is a pure $d$-dimensional $1$-decomposable complex $\mE$ on $8(d-1)$ vertices such that, for any set $A$ of at most $d-3$ new vertices, $\mE$ cannot be extended to the $d$-skeleton of the simplex on its vertex set together with $A$ one facet at a time while maintaining shellability.
\end{thmtight}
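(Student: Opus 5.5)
The plan is to start from the Bolognini--Sentinelli counterexample and inflate it. Then we show that its austerity survives when at most $d-3$ new vertices are added, in a form that also blocks shellings passing through facets that use new vertices. First, recall from \cite{bs26} the base object on $8$ vertices and the way their $3$-dimensional counterexample on $16$ vertices arises from it, namely each base vertex is doubled. Define $\mE$ by replacing every base vertex $v$ by $d-1$ copies $v_1,\dots,v_{d-1}$. The facets of $\mE$ are the $(d+1)$-subsets of the $8(d-1)$ copies whose pattern of base vertices, with multiplicities, is one of the allowed patterns of the base construction. For $d=3$ this recovers the $16$-vertex example.

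Two properties must then be checked for $\mE$.

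\begin{enumerate}
\item \textbf{$1$-decomposability.} We prove it by induction on $d$. A suitable copy vertex should be a shedding face, because its link and deletion are inflations of the same kind with one fewer copy. At the bottom of the induction we use the known structure of the base, whose decomposability we take from \cite{bs26} or verify directly on $8$ vertices.
\item \textbf{Minimal nonfaces.} We describe the minimal nonfaces of $\mE$ explicitly. The key quantitative fact we aim for is that every missing facet $G$ on old vertices contains two minimal nonfaces $N_1,N_2$. Moreover, these can be chosen so that the covering is robust: for every $x\in G$, the ridge $G\setminus\{x\}$ still contains $N_1$ or $N_2$, and it contains a minimal nonface with room to spare, of size at most $d+1-(d-2)=3$ after discounting copies.
\end{enumerate}

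Next comes the extension argument. Fix a set $A$ of new vertices with $|A|\le d-3$, and suppose we have a shelling that extends $\mE$ to the $d$-skeleton on $V(\mE)\cup A$. Some missing facet $G$ lying entirely on old vertices must eventually be added; let $G$ be the first such facet. Before $G$ is added, every added facet $H$ meets $A$, so $|H\setminus A|\ge d+1-(d-3)=4$. We prove the following invariant by induction along the shelling:
\begin{quote}
Every face of the current complex that lies on old vertices and contains a minimal nonface of $\mE$ from our list has fewer than $d$ vertices.
\end{quote}
In the induction step we add $H$ with $H\cap A\neq\emptyset$. The intersection of $H$ with the earlier complex must be pure of dimension $d-1$, so each old-vertex ridge $H\setminus\{a\}$ with $a\in A$ that appears must already be a face. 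Tracing these requirements back, we find that any old-vertex $d$-subset of $H$ containing a minimal nonface would force an earlier face violating the invariant. When $G$ is finally added, its intersection with the complex must contain some ridge $G\setminus\{x\}$. By the robustness property, that ridge contains a minimal nonface. It is a $d$-set on old vertices, which contradicts the invariant.

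The main obstacle is this invariant. Facets through new vertices can create old-vertex faces that are not in $\mE$, and the proof must show that, with only $d-3$ new vertices, they cannot create ridges of $G$ that contain a minimal nonface. The inflation to $d-1$ copies is what should make the count work: a minimal nonface spread over copies needs enough old vertices that a facet with $j\ge1$ new vertices cannot create it in a ridge unless $j\ge d-2$. I would first try to make this precise by counting copies of each base vertex inside $H\setminus A$ against the sizes of the inflated minimal nonfaces. If that fails, I would strengthen the invariant to track, for each base vertex, how many of its copies can appear together in an old-vertex face. Checking that the bound is exactly $d-3$ is a useful sanity check: with $d-2$ new vertices, Theorem~\ref{thm:main} must win, so the count has to fail there.
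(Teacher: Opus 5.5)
Your outline (inflate $\Gamma_0$ with classes of size $d-1$, prove $1$-decomposability, block extensions through minimal nonfaces) matches the paper, but the extension argument fails as written.

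\textbf{The robust covering property is false.} Take a non-face $abc$ of $\Gamma_0$, one element $q\in C_b$, and $d$ elements of $C_a\cup C_c$ meeting both classes. The resulting missing facet $G$ contains only the minimal nonfaces $pqr$ with $p\in G\cap C_a$, $r\in G\cap C_c$, and all of them pass through $q$. Moreover $G\setminus\{q\}$ has support $\{a,c\}$, an edge of $\Gamma_0$, so it is a face of $\mE$. Hence the ridge through which $G$ is attached need not contain any minimal nonface. All you know is that $G\setminus\{y\}$ is a face exactly when $y$ lies in every minimal nonface inside $G$. The paper gets the contradiction by induction on $d$:
\begin{itemize}
\item if no such $y$ exists, $G$ is isolated;
\item otherwise $N_1\setminus\{y\}$ and $N_2\setminus\{y\}$ are distinct minimal nonfaces of the link of $y$ inside $G\setminus\{y\}$, and links of shellable complexes are shellable.
\end{itemize}
This shows that adding $G$ destroys shellability in every order. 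That is what the statement needs, because maintaining shellability means each intermediate complex is shellable, not that one fixed shelling of $\mE$ is continued.

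\textbf{The invariant you leave open is the wrong thing to track.} Instead of the first added facet on old vertices, take the first added facet $H$ with $H\cap V\notin\mE$, i.e.\ the first one outside the coning $\skel{d}(\mE * A)$. It exists because $\Gamma_0$ is not the full $2$-skeleton on $[8]$. Every earlier added facet has the form $F\cup S$ with $F\in\mE$ and $S\subseteq A$. So the faces of the current complex inside $V$ are exactly those of $\mE$, and nothing needs to be tracked. Your own count gives $|H\cap V|\geq 4$. What is missing is that every nonface of $\mE$ with between $4$ and $d+1$ elements, not only every missing $(d+1)$-set, contains two distinct minimal nonfaces, namely triples over non-faces of $\Gamma_0$. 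This is where quietness of $\Gamma_0$ is used: complete $1$-skeleton and at most two facets on any four vertices. These triples are also minimal nonfaces of the current complex, so $H$ cannot be added, whether or not it uses new vertices.

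\textbf{The $1$-decomposability step does not go through as proposed.} For $p\in C_\ell$:
\begin{itemize}
\item the deletion $\del_p{\mE_d(\Gamma_0)}$ is still $d$-dimensional, with only $C_\ell$ shrunk;
\item the link $\lk_p{\mE_d(\Gamma_0)}$ is the $(d-1)$-dimensional inflation of the closed star $\lk_\ell{\Gamma_0} * \ell$, with the other classes still of size $d-1$.
\end{itemize}
Neither is an inflation of the same kind with one fewer copy, so induction on $d$ does not close. The case $d=3$ cannot be quoted from Bolognini and Sentinelli either: they lift a shelling, not a decomposition.

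The missing idea is to induct on the number of facets of the base and lift a restricted decomposition of $\Gamma_0$. The complex $\Gamma_0$ reduces to a single triangle by shedding edges contained in exactly one facet, and then vertices whose links are trees. Each such base shedding face $F$ is replaced by a bundle of faces of the inflation:
\begin{itemize}
\item for a base edge $ab$, the $(d-1)^2$ edges $pq$ with $p\in C_a$, $q\in C_b$;
\item for a base vertex $a$, the $d-1$ elements of $C_a$.
\end{itemize}
Each member of a bundle is gluable at its turn with vertex decomposable link. In the edge case the link is a complex coned by $C_c$, where $abc$ is the unique facet through $ab$. In the vertex case it is an inflated cone over a tree. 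The bundle takes $\mE_d(\Gamma)$ to $\mE_d(\del_F{\Gamma})$. The restrictions on the base shedding faces are exactly what make these links vertex decomposable.
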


Hence for every $d \geq 3$ and $k \geq 1$, the complex $\Sk{d}{n+d-2}$ in \cref{thm:main} cannot be replaced by $\Sk{d}{n+d-3}$. For $d = 3$ this says that the $280$-facet counterexample of \cite{bs26} is itself $1$-decomposable, answering the first question of \cite{ghkklotw25} already in the smallest case.

The strategy for \cref{thm:main} follows \cite{ghkklotw25}: we cone the complex by $d-2$ new vertices, and use the cone vertices to enlarge the skeleton of the complex one edge or triangle at a time. The new ingredient that makes the argument work for all $k$, and considerably simplifies it, is that once the triangles have been filled in, the coned complex is automatically vertex decomposable (\cref{lem:conedfullVD}). The theorem of \cite{cdgo22} then finishes the job. The $1$-decomposability of the inflated counterexample is proved by lifting a $1$-decomposition of its $8$-vertex base.

The remainder of the paper is organized as follows. In Section~\ref{sec:prelim} we recall the necessary definitions, introduce the coning operation and $k$-full complexes, and prove that a sufficiently coned $k$-full complex is vertex decomposable. In Section~\ref{sec:main} we show how to add triangles to the base of a coned complex and prove \cref{thm:main}. In Section~\ref{sec:tight} we prove \cref{thm:tight}.

\section{Preliminaries}
\label{sec:prelim}

\subsection{Basic terminology}
We follow the conventions of \cite{ghkklotw25}. A \newword{simplicial complex} $\Delta$ on a finite ground set $V$ is a collection of subsets of $V$ that are closed under taking subsets. The elements of $\Delta$ are called \newword{faces}, and the elements $v \in V$ such that $\{v\} \in \Delta$ are called the \newword{vertices} of $\Delta$. Following \cite{j08}, the \newword{void complex} $\emptyset$ is distinct from the \newword{empty complex} $\{\emptyset\}$. A \newword{facet} of $\Delta$ is a face that is maximal under inclusion, the \newword{dimension} of $\Delta$ is the largest cardinality of a facet minus $1$, and $\Delta$ is \newword{pure} if all facets have the same cardinality. We use $\langle F_1,\dots,F_t \rangle$ to denote the simplicial complex with facets $F_1,\dots,F_t$. Given two facets $F_1$ and $F_2$ of a pure $d$-dimensional complex, we say that they are \newword{adjacent} if $|F_1 \cap F_2| = d$.

For a face $F \in \Delta$ we use $\lk_F{\Delta}$, $\plk_F{\Delta}$, and $\del_F{\Delta}$ for the \newword{link}, \newword{star}, and \newword{deletion} of $F$, defined as in \cite{ghkklotw25}:
\begin{align*}
\lk_F{\Delta} &:= \{G \in \Delta : G \cap F = \emptyset, G \cup F \in \Delta\}, \\
\plk_F{\Delta} &:= \{G \in \Delta : F \subset G\}, \\
\del_F{\Delta} &:= \{G \in \Delta: F \not \subset G\}.
\end{align*}
For a subset $W$ of the ground set, the \newword{restriction} of $\Delta$ to $W$ (also called the induced subcomplex on $W$) is $\restr{\Delta}{W} := \{G \in \Delta : G \subseteq W\}$. For a finite set $W$ we write $2^{W}$ for the simplex on $W$. For complexes $\Delta$ and $\Gamma$ on disjoint ground sets, their \newword{join} is $\Delta * \Gamma := \{G \cup G' : G \in \Delta,\ G' \in \Gamma\}$. For a finite set $W$ disjoint from the ground set of $\Delta$ we write $\Delta * W := \Delta * 2^{W}$ for the join with the simplex on $W$, and $\Delta * a$ when $W = \{a\}$, the \newword{cone} over $\Delta$ with apex $a$.

Given a simplicial complex $\Delta$ and an integer $t \geq -1$, we write $\skel{t}(\Delta)$ for the \newword{$t$-skeleton} of $\Delta$, the subcomplex consisting of all faces of $\Delta$ of dimension at most $t$. For a finite set $W$ we write $\Sk{d}{W} := \skel{d}(2^{W})$ for the $d$-skeleton of the simplex on $W$, and as in \cite{bs26} we abbreviate $\Sk{d}{[n]}$ to $\Sk{d}{n}$. We often treat $\skel{1}(\Delta)$ as a graph.

\begin{definition}[\cite{pb80}]
\label{def:kdec}
A pure $d$-dimensional simplicial complex $\Delta$ is said to be \newword{$k$-decomposable} if $\Delta$ is a simplex, or $\Delta$ contains a face $F$ such that
\begin{enumerate}
    \item $\dim(F) \leq k$,
    \item both $\del_F{\Delta}$ and $\lk_F{\Delta}$ are $k$-decomposable, and
    \item $\del_F{\Delta}$ is pure (and the dimensions stay the same as that of $\Delta$).
\end{enumerate}
A face $F$ that satisfies the first and third conditions is called a \newword{gluable face} of $\Delta$, and a face satisfying all three conditions is called a \newword{shedding face}.
\end{definition}

The $0$-decomposable complexes are the \newword{vertex decomposable} complexes, and $d$-decomposable coincides with shellable. Any pure $k$-decomposable complex is shellable \cite{pb80}.

\subsection{Combinatorial tools}
We recall the following tools from \cite{ghkklotw25}, which we will use throughout the paper. The first one is a purely combinatorial way to check if a face is a gluable face.

\begin{lemma}[Gluing criterion, {\cite[Lemma 2.5]{ghkklotw25}}]
\label{lem:shedd}
Suppose $\Delta$ is a pure simplicial complex, and let $F$ be a face with dimension $\leq k$. Then $F$ is a gluable face if and only if for any $f \in F$ and any facet $H$ of $\plk_F{\Delta}$, there exists a facet $H'$ in $\Delta$ such that $H \cap H' = H \setminus \{f\}$ (so they are adjacent).
\end{lemma}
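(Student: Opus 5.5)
We prove both directions of the criterion by looking at the facets of $\del_F\Delta$. Since $\Delta$ is pure, $F$ is gluable exactly when $\del_F\Delta$ is pure of the same dimension $d$. So the task is to show that every maximal face of $\del_F\Delta$ has cardinality $d+1$ if and only if the stated adjacency condition holds.

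First we describe the faces of $\del_F\Delta$. Every facet $H'$ of $\Delta$ with $F\not\subseteq H'$ is a face of $\del_F\Delta$, and it is maximal there, since it is already maximal in $\Delta$. Any other face $G$ of $\del_F\Delta$ lies in some facet $H$ of $\Delta$. If $F\not\subseteq H$, then $G$ lies in a $d$-dimensional face of $\del_F\Delta$ and causes no impurity. Otherwise $F\subseteq H$, so $H$ is a facet of $\plk_F\Delta$. Because $F\not\subseteq G$, there is some $f\in F\setminus G$, and hence $G\subseteq H\setminus\{f\}$. The faces $H\setminus\{f\}$, with $H$ a facet of $\plk_F\Delta$ and $f\in F$, are faces of $\del_F\Delta$ of dimension $d-1$. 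Therefore $\del_F\Delta$ is pure of dimension $d$ if and only if each such $H\setminus\{f\}$ is contained in some $d$-dimensional face of $\del_F\Delta$.

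Next we identify the $d$-dimensional faces containing $H\setminus\{f\}$. A $d$-dimensional face of $\del_F\Delta$ is a facet $H'$ of $\Delta$ with $F\not\subseteq H'$. If $H'\supseteq H\setminus\{f\}$ and $H'\neq H$, then $H\cap H'=H\setminus\{f\}$ by cardinality, and $H'$ automatically omits $f$, so $F\not\subseteq H'$. Conversely, any facet $H'$ with $H\cap H'=H\setminus\{f\}$ contains $H\setminus\{f\}$ and omits $f\in F$, so it is a $d$-face of $\del_F\Delta$. Thus $H\setminus\{f\}$ extends to a $d$-face of $\del_F\Delta$ exactly when the required adjacent facet $H'$ exists.

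Putting these together gives both directions of the criterion. If the condition holds, every face of $\del_F\Delta$ lies in a $d$-face, so $\del_F\Delta$ is pure. If $F$ is gluable, then each $H\setminus\{f\}$, being a face of $\del_F\Delta$, lies in a $d$-face, which yields the adjacent facet $H'$.

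The main care point is the degenerate case $F=\emptyset$, where the condition is vacuous. There $\del_\emptyset\Delta$ is the void complex, so the convention for purity of the void complex must be taken to match. The hypothesis $\dim F\le k$ plays no role in the argument; it only enters through the definition of a gluable face.
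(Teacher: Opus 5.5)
Your proof is correct. You reduce purity of $\del_F\Delta$ to whether each $d$-subset $H\setminus\{f\}$ (with $H$ a facet containing $F$ and $f\in F$) lies in a facet of $\Delta$ avoiding $F$. You then note that such a facet is exactly an $H'$ with $H\cap H'=H\setminus\{f\}$: it differs from $H$ because it misses $F$, so by cardinality it must miss $f$. This is the natural direct argument.

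The paper gives no proof of its own here; it quotes the lemma from \cite{ghkklotw25}, and your argument is the standard verification. Your caveat is also well taken: the statement tacitly requires $F\neq\emptyset$, since for $F=\emptyset$ the condition is vacuous but $\del_\emptyset\Delta$ is the void complex.
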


As in \cite{ghkklotw25}, for pure complexes $\Delta$ and $\Gamma$ of the same dimension we write $\Delta + F$ for the complex generated by the facets of $\Delta$ together with $F$. We write $\Delta \setminus \Gamma$ for the complex generated by the facets of $\Delta$ that are not facets of $\Gamma$, and $\Delta + \Gamma$ for the complex generated by the facets of both when they share no facets.

\begin{lemma}[{\cite[Lemma 2.8]{ghkklotw25}}]
\label{lem:ordering}
Let $\Delta$ be a pure $k$-decomposable complex. Then we can order the facets $F_1,\dots,F_t$ such that $\langle F_1,\dots,F_i \rangle$ for $1 \leq i \leq t$ is $k$-decomposable.
\end{lemma}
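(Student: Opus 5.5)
I will argue by induction on the number of facets of $\Delta$, and build the ordering recursively from a shedding face. If $\Delta$ is a simplex there is a single facet and nothing to prove. Otherwise fix a shedding face $F$ of $\Delta$ as in \cref{def:kdec}, so that $\dim F\le k$, $\del_F\Delta$ is pure of dimension $d$, and both $\del_F\Delta$ and $\lk_F\Delta$ are $k$-decomposable.

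First I check that both pieces have strictly fewer facets than $\Delta$, so that the induction hypothesis applies. The facets of $\Delta$ split into two classes. Those not containing $F$ are exactly the facets of $\del_F\Delta$: $\del_F\Delta$ is pure of dimension $d$ and contains all $d$-faces of $\Delta$ not containing $F$. Those containing $F$ are exactly the sets $F\cup L$ with $L$ a facet of $\lk_F\Delta$. $F$ is not the empty face, since otherwise $\del_F\Delta$ would be void rather than $d$-dimensional. Hence both classes are nonempty, and each piece has fewer facets than $\Delta$. By induction I order the facets of $\del_F\Delta$ as $G_1,\dots,G_r$ and those of $\lk_F\Delta$ as $L_1,\dots,L_s$, with every prefix $k$-decomposable in each case. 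My candidate ordering for $\Delta$ is
\[G_1,\dots,G_r,\;F\cup L_1,\dots,F\cup L_s.\]

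The prefixes $\langle G_1,\dots,G_i\rangle$ are $k$-decomposable by construction. For the remaining prefixes, set $\Gamma_j=\langle G_1,\dots,G_r,F\cup L_1,\dots,F\cup L_j\rangle$ for $1\le j\le s$. I will show that $F$ is a shedding face of $\Gamma_j$.

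\begin{itemize}
\item \emph{Deletion.} The faces of $\Gamma_j$ not containing $F$ are the faces of the $G_i$ together with the faces of the $F\cup L_i$ ($i\le j$) not containing $F$. The latter are faces of $\Delta$ not containing $F$, so they already lie in $\del_F\Delta$. Hence $\del_F\Gamma_j=\del_F\Delta$, which is pure of dimension $d$ and $k$-decomposable.
\item \emph{Link.} Let $G$ be disjoint from $F$ with $G\cup F\in\Gamma_j$. The face $G\cup F$ cannot lie in any $G_i$, since no $G_i$ contains $F$. So $G\cup F\subseteq F\cup L_i$ for some $i\le j$, which forces $G\subseteq L_i$. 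Hence $\lk_F\Gamma_j=\langle L_1,\dots,L_j\rangle$, which is $k$-decomposable by the choice of the ordering of the link.
\end{itemize}

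Since $\dim F\le k$, all three conditions of \cref{def:kdec} hold, so $\Gamma_j$ is $k$-decomposable. This completes the induction.

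The only delicate point is the bookkeeping in these two identities: the deletion must not change as facets $F\cup L_i$ are added, and the link must grow exactly by the $L_i$. Both rest on the fact that no facet of $\del_F\Delta$ contains $F$, together with the purity of $\del_F\Delta$ in dimension $d$.
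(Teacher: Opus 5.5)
Your proof is correct, and it is the standard argument for this lemma. The paper itself only cites the lemma from \cite{ghkklotw25}: list the facets of $\del_F\Delta$ first, then append $F\cup L_1,\dots,F\cup L_s$ following a $k$-decomposable prefix ordering of $\lk_F\Delta$. The bookkeeping you isolate is exactly the mechanism the paper uses in its proofs of \cref{lem:conekdec,lem:addtriangle}: $\del_F\Gamma_j=\del_F\Delta$ stays fixed (so gluability is automatic), while $\lk_F\Gamma_j=\langle L_1,\dots,L_j\rangle$ grows along that ordering.
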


\begin{lemma}[{\cite[Lemma 2.10]{ghkklotw25}}]
\label{lem:cheatcheckgluable}
Let $\Delta$ be a pure $k$-decomposable complex such that $F$ is either a gluable face or not a face of $\Delta$ at all. Let $F_1,\dots,F_t$ be facets not in $\Delta$ that do not contain $F$ and let $F_{t+1},\dots,F_q$ be facets not in $\Delta$ that contain $F$. If $\Delta + \langle F_1,\dots,F_q \rangle$ has $F$ as a gluable face, then $F$ is a gluable face in $\Delta + \langle F_1,\dots,F_i \rangle$ for any $1 \leq i \leq q$.
\end{lemma}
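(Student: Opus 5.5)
The plan is to argue entirely through the gluing criterion (\cref{lem:shedd}). The dimension condition $\dim(F) \le k$ does not depend on which facets are present, so for each intermediate complex $\Delta_i := \Delta + \langle F_1,\dots,F_i\rangle$ it suffices to check the following: for every $f \in F$ and every facet $H$ of $\plk_F{\Delta_i}$ there is a facet $H'$ of $\Delta_i$ with $H \cap H' = H \setminus \{f\}$. All of these complexes are pure of the same dimension, so the criterion applies throughout.

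The key observation is that any witness $H'$ for a pair $(H,f)$ never contains $F$. Indeed, $f \in H$ but $f \notin H\cap H'$, so $f \notin H'$. Hence $H'$ is either a facet of $\Delta$ or one of $F_1,\dots,F_t$, and never one of $F_{t+1},\dots,F_q$. I will split according to whether $i \le t$ or $i > t$.

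\emph{Case $i \le t$.} None of $F_1,\dots,F_i$ contains $F$, so $\plk_F{\Delta_i} = \plk_F{\Delta}$. If $F$ is not a face of $\Delta$, then it is not a face of $\Delta_i$ either. The star is then void and the criterion holds vacuously; equivalently, $\del_F{\Delta_i} = \Delta_i$ is pure, which is what gluability amounts to in this degenerate situation. If $F$ is a gluable face of $\Delta$, then \cref{lem:shedd} applied to $\Delta$ gives, for each facet $H$ of the star and each $f \in F$, a witness $H'$ that is a facet of $\Delta$. Such an $H'$ is still a facet of $\Delta_i$, since $\Delta_i$ is pure of the same dimension and only adds facets. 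So the criterion holds in $\Delta_i$.

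Here one should not use the witnesses coming from the final complex. Those may be some $F_j$ with $i < j \le t$, which is not yet present in $\Delta_i$. This is the one point that needs care, and it is exactly why the hypothesis on $\Delta$ itself is needed.

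\emph{Case $i > t$.} Now all of $F_1,\dots,F_t$ are present. Every facet $H$ of $\plk_F{\Delta_i}$ is also a facet of $\plk_F$ of the full complex $\Delta + \langle F_1,\dots,F_q\rangle$. Gluability of $F$ there gives a witness $H'$ for each $f\in F$. By the key observation, $H'$ lies in $\Delta$ or among $F_1,\dots,F_t$, hence is a facet of $\Delta_i$. So the criterion holds, and $F$ is gluable in $\Delta_i$.

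Note that the $k$-decomposability of $\Delta$ is not actually used; only purity is needed.
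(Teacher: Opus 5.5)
Your argument is correct, and it is the natural gluing-criterion proof of this lemma; the present paper only cites it from \cite{ghkklotw25} and gives no proof of its own. The proof splits at $i=t$. For $i\le t$ you reuse the witnesses from $\Delta$, correctly noting that witnesses from the final complex may not yet be present. For $i>t$ you observe that any witness $H'$ for $(H,f)$ omits $f$, so it does not contain $F$ and already lies in $\Delta+\langle F_1,\dots,F_t\rangle$. Your reading of the degenerate case ($F\notin\Delta$, $i\le t$, where the conclusion can only mean that $\del_F$ of the complex is pure) is accurate, and so is your remark that only purity of $\Delta$ is used.
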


We end the subsection with a simple observation on skeleta that we will need.

\begin{lemma}
\label{lem:skelkdec}
Let $\Delta$ be a pure $d$-dimensional $k$-decomposable complex and let $0 \leq t \leq d$. Then $\skel{t}(\Delta)$ is $k$-decomposable.
\end{lemma}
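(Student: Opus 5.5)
We argue by induction on the number of facets of $\Delta$ (equivalently, by following the recursive definition of $k$-decomposability), proving simultaneously for every $t \le d$ that $\skel{t}(\Delta)$ is $k$-decomposable. The base case is $\Delta$ a simplex $2^{F}$. Then $\skel{t}(2^F) = \Sk{t}{F}$, and we must show that skeleta of simplices are $k$-decomposable. In fact they are vertex decomposable: for a vertex $v \in F$, $\del_v \Sk{t}{F} = \Sk{t}{F\setminus v}$ and $\lk_v \Sk{t}{F} = \Sk{t-1}{F\setminus v}$. The deletion is pure of dimension $t$ as long as $|F\setminus v| \ge t+1$. If $|F| = t+1$ the complex is itself a simplex. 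So induction on $|F|$ settles the base case, with $0$-decomposability implying $k$-decomposability.

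For the inductive step, let $F$ be a shedding face of $\Delta$ with $\dim F \le k$. The plan is to use the \emph{same} face $F$ as a shedding face of $\skel{t}(\Delta)$, whenever $F$ is still a face of it. We check the two identities
\[
\del_F \skel{t}(\Delta) = \skel{t}(\del_F \Delta), \qquad \lk_F \skel{t}(\Delta) = \skel{t-|F|}(\lk_F \Delta).
\]
Both follow directly from the definitions: a face $G$ of dimension $\le t$ not containing $F$ lies in $\Delta$ iff it lies in $\del_F\Delta$, and $G\cup F$ has dimension $\le t$ iff $|G| \le t+1-|F|$. Since $\del_F\Delta$ is pure of dimension $d \ge t$, its $t$-skeleton is pure of dimension $t$; this gives purity and the correct dimension. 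Both $\del_F\Delta$ and $\lk_F\Delta$ are $k$-decomposable with fewer facets, and $\lk_F\Delta$ is pure of dimension $d-|F| \ge t-|F|$. The inductive hypothesis, applied for all skeleton levels, therefore shows that both are $k$-decomposable.

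The main obstacle is the degenerate regime where $|F| > t$, i.e.\ $\dim F \ge t$. When $|F| = t+1$, $F$ is a facet of $\skel{t}(\Delta)$, its link is the empty complex $\{\emptyset\}$ (a simplex, which is fine), and the deletion identity still holds. When $|F| > t+1$, $F$ is not a face of $\skel{t}(\Delta)$ at all. For this case we instead replace $F$ by a subface: we want a shedding face of size at most $t+1$. We first try a different route. Since $\del_F\Delta$ has the same $t$-skeleton as $\Delta$ (every face of dimension $\le t < \dim F$ not containing $F$ is trivially unaffected, and faces containing $F$ have dimension $> t$), we get $\skel{t}(\Delta) = \skel{t}(\del_F\Delta)$. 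The induction hypothesis applied to $\del_F\Delta$ then finishes this case directly. With this observation the degenerate case collapses, and the whole proof reduces to the two skeleton identities above.
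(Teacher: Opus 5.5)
Your proof is correct and follows the paper's argument. Both induct on the number of facets, reuse the shedding face $F$ of $\Delta$ via the identities $\del_F{\skel{t}(\Delta)} = \skel{t}(\del_F{\Delta})$ and $\lk_F{\skel{t}(\Delta)} = \skel{t-|F|}(\lk_F{\Delta})$, and dispose of the case $\dim F > t$ through $\skel{t}(\Delta) = \skel{t}(\del_F{\Delta})$. The one difference is that you read off purity of the deletion directly from the first identity, whereas the paper checks gluability separately through the gluing criterion (\cref{lem:shedd}); your shortcut is valid and makes that step unnecessary.
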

\begin{proof}
We induct on the number of facets of $\Delta$, and may assume $t < d$. If $\Delta$ is a simplex, then $\skel{t}(\Delta)$ is a skeleton of a simplex, which is vertex decomposable \cite{pb80}. Otherwise let $F$ be a shedding face of $\Delta$. If $\dim(F) > t$, then no face of dimension at most $t$ contains $F$, so $\skel{t}(\Delta) = \skel{t}(\del_F{\Delta})$ is $k$-decomposable by induction.

So assume $\dim(F) \leq t$. We show that $F$ is a shedding face of $\skel{t}(\Delta)$. For the gluing criterion \cref{lem:shedd}, take $f \in F$ and a facet $H$ of $\plk_F{\skel{t}(\Delta)}$, and let $G$ be a facet of $\Delta$ containing $H$. As $F$ is gluable in $\Delta$, there is a facet $G'$ of $\Delta$ with $G \cap G' = G \setminus \{f\}$. Since $|G'| = d+1 > |H|$ we can pick $x \in G' \setminus H$. Then $H' := (H \setminus \{f\}) \cup \{x\}$ is a facet of $\skel{t}(\Delta)$ with $H \cap H' = H \setminus \{f\}$. Next, a face of $\skel{t}(\Delta)$ not containing $F$ is a face of $\del_F{\Delta}$ of dimension at most $t$, and $G \in \lk_F{\skel{t}(\Delta)}$ if and only if $G \cap F = \emptyset$, $G \cup F \in \Delta$ and $|G \cup F| \leq t+1$. Hence
\[
\del_F{\skel{t}(\Delta)} = \skel{t}(\del_F{\Delta}), \qquad \lk_F{\skel{t}(\Delta)} = \skel{t - |F|}(\lk_F{\Delta}),
\]
both $k$-decomposable by induction (when $|F| = t+1$ the link is the empty complex).
\end{proof}

When $k = 0$, the above lemma is \cite[Lemma 3.10]{Woodroofe2011}, and when $k = d$ it recovers the fact that skeleta of shellable complexes are shellable \cite{bw96}.

\begin{example}
Let
\[
\Delta = \langle 123,124,125,134,136,245,256,346,356,456 \rangle,
\]
which is $1$-decomposable with $15$ as a shedding face \cite{ghkklotw25}. Then $\skel{1}(\Delta)$ is the complete graph $K_6$, and $15$ is again a shedding face of it, with deletion $K_6$ minus an edge (a connected graph) and link $\{\emptyset\}$.
\end{example}

\subsection{Coning and fullness}
We now introduce the main operation of the paper. It is a dimension-preserving analogue of taking the cone.

\begin{definition}
\label{def:cone}
Let $\Gamma$ be a pure $d$-dimensional simplicial complex on $V$ and let $A$ be a finite set disjoint from $V$. The \newword{coning of $\Gamma$ by $A$} is the pure $d$-dimensional complex $\skel{d}(\Gamma * A)$ on $V \sqcup A$, whose facets are the sets $F \cup S$ with $F \in \Gamma$, $S \subseteq A$ and $|F| + |S| = d+1$. We say that a pure $d$-dimensional complex $\Delta$ on $V \sqcup A$ is \newword{coned with respect to $A$} if $\Delta = \skel{d}(\restr{\Delta}{V} * A)$, and call $\restr{\Delta}{V}$ its \newword{base}.
\end{definition}

For a single vertex $a$, the coning $\skel{d}(\Gamma * a) = \Gamma + \langle F \cup \{a\} : F \in \Gamma,\ |F| = d \rangle$ is obtained by attaching $a$ to every $(d-1)$-dimensional face of $\Gamma$, and coning by $A$ is coning by the elements of $A$ one at a time. We use the notation $\skel{d}(\Gamma * A)$ also when $\Gamma$ is not pure or has dimension less than $d$ (for instance a graph with isolated vertices). When $\Gamma = G$ is a graph, $\skel{d}(G * A)$ is the complex $\full^{d}(G^{A})$ of \cite{ghkklotw25}.

\begin{example}
Take $\Gamma = \langle 123,134 \rangle$ and $a = 5$. The $1$-dimensional faces of $\Gamma$ are $12,13,14,23,34$, so
\[
\skel{2}(\Gamma * 5) = \langle 123,134,125,135,145,235,345 \rangle.
\]
\end{example}

Coning preserves $k$-decomposability, and it does so adding one facet at a time.

\begin{lemma}
\label{lem:conekdec}
Let $\Gamma$ be a pure $d$-dimensional $k$-decomposable complex on $V$ and let $A$ be a finite set disjoint from $V$. Then we can order the facets of $\skel{d}(\Gamma * A) \setminus \Gamma$ as $F_1,\dots,F_q$ so that $\Gamma + \langle F_1,\dots,F_i \rangle$ is $k$-decomposable for all $1 \leq i \leq q$. In particular $\skel{d}(\Gamma * A)$ is $k$-decomposable.
\end{lemma}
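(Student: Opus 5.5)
The plan is to reduce to the case of a single apex and then use that apex itself as the shedding face. First, coning by $A$ is the same as coning by its elements one at a time. If $A = \{a_1,\dots,a_m\}$, the faces of $\skel{d}(\Gamma * \{a_1,\dots,a_{j}\})$ are the sets $F \cup S$ with $F \in \Gamma$, $S \subseteq \{a_1,\dots,a_j\}$ and $|F \cup S| \leq d+1$. Hence
\[
\skel{d}\bigl(\skel{d}(\Gamma * \{a_1,\dots,a_j\}) * a_{j+1}\bigr) = \skel{d}(\Gamma * \{a_1,\dots,a_{j+1}\}).
\]
Each intermediate complex is again pure $d$-dimensional, and it is $k$-decomposable by the single-vertex case applied to the previous one. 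Concatenating the orderings produced at each stage gives the required ordering for $A$. So it suffices to treat $A = \{a\}$.

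For a single vertex $a$, the facets of $\skel{d}(\Gamma * a) \setminus \Gamma$ are exactly $G \cup \{a\}$ for the $(d-1)$-dimensional faces $G$ of $\Gamma$, that is, the facets of $\skel{d-1}(\Gamma)$. By \cref{lem:skelkdec}, $\skel{d-1}(\Gamma)$ is $k$-decomposable. By \cref{lem:ordering}, we can order its facets $G_1,\dots,G_q$ so that each $\langle G_1,\dots,G_i \rangle$ is $k$-decomposable. Set $F_i := G_i \cup \{a\}$ and $\Delta_i := \Gamma + \langle F_1,\dots,F_i \rangle$. I claim $\{a\}$ is a shedding face of each $\Delta_i$.

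The checks are as follows. We have $\dim\{a\} = 0 \leq k$. The deletion is $\del_{\{a\}}\Delta_i = \Gamma$, which is pure of dimension $d$ and $k$-decomposable. The link is $\lk_{\{a\}}\Delta_i = \langle G_1,\dots,G_i\rangle$, which is $k$-decomposable by the choice of ordering. For gluability, use \cref{lem:shedd}. The facets of $\plk_{\{a\}}\Delta_i$ are the $F_j$ with $j \leq i$. Since $G_j$ is a face of $\Gamma$, it lies in some facet $H'$ of $\Gamma \subseteq \Delta_i$, and $a \notin H'$, so $F_j \cap H' = G_j = F_j \setminus \{a\}$. Therefore each $\Delta_i$ is $k$-decomposable.

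I do not expect a genuine obstacle here. The only delicate point is making sure the link is $k$-decomposable at every intermediate stage and not just at the end. This is exactly why the new facets are ordered by a $k$-decomposable ordering of $\skel{d-1}(\Gamma)$, via \cref{lem:skelkdec} and \cref{lem:ordering}, rather than arbitrarily. The final claim, that $\skel{d}(\Gamma * A)$ is $k$-decomposable, is the case $i = q$ of the last stage.
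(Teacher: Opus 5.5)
Your proof is correct and follows the paper's argument essentially verbatim. You reduce to a single apex $a$, order the facets of $\skel{d-1}(\Gamma)$ via \cref{lem:skelkdec,lem:ordering}, and use $a$ as a shedding face of each $\Delta_i$, with deletion $\Gamma$, link $\langle G_1,\dots,G_i\rangle$, and gluability witnessed by a facet of $\Gamma$ containing $G_j$. Your explicit check of the iterated-coning identity and of the purity of the intermediate complexes fills in what the paper compresses into ``coning by $A$ is coning by one vertex at a time.''
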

\begin{proof}
Coning by $A$ is coning by one vertex at a time, so we may assume $A = \{a\}$. By \cref{lem:skelkdec,lem:ordering} we can order the facets of $\skel{d-1}(\Gamma)$ as $G_1,\dots,G_q$ so that each $\langle G_1,\dots,G_i \rangle$ is $k$-decomposable, and we set $F_i := G_i \cup \{a\}$, so that $\skel{d}(\Gamma * a) = \Gamma + \langle F_1,\dots,F_q \rangle$. Let $\Delta_i := \Gamma + \langle F_1,\dots,F_i \rangle$. Each facet of $\plk_a{\Delta_i}$ has the form $G_j \cup \{a\}$, and $G_j$ lies in a facet of $\Gamma$ adjacent to it, so $a$ is gluable in $\Delta_i$ by \cref{lem:shedd}. As $\del_a{\Delta_i} = \Gamma$ and $\lk_a{\Delta_i} = \langle G_1,\dots,G_i \rangle$ are $k$-decomposable, so is $\Delta_i$.
\end{proof}

\begin{example}
For $\Gamma = \langle 123,134 \rangle$ as above, ordering the edges as $12$, $13$, $23$, $14$, $34$ (each prefix is a connected graph) and adding
\[
125,135,235,145,345
\]
in this order keeps the complex vertex decomposable at every step.
\end{example}

\begin{definition}
\label{def:kfull}
Let $k \geq -1$. A simplicial complex $\Delta$ on $V$ is \newword{$k$-full} if every subset of $V$ of cardinality at most $k+1$ is a face of $\Delta$, that is, $\skel{k}(\Delta)$ is the $k$-skeleton of the simplex on $V$.
\end{definition}

Every complex is $(-1)$-full, a complex is $0$-full if every element of its ground set is a vertex, and $1$-full if $\skel{1}(\Delta)$ is complete. If $\Delta$ is $k$-full then so is $\restr{\Delta}{W}$ for any $W \subseteq V$. The first main tool of the paper is that a coned complex with enough cone vertices, compared to how full it is, is automatically vertex decomposable.

\begin{lemma}
\label{lem:conedfullVD}
Let $k \geq -1$ and let $\Delta$ be a pure $d$-dimensional simplicial complex on $V \sqcup A$ that is coned with respect to $A$ and is $k$-full. If $|A| \geq d - k$, then $\Delta$ is vertex decomposable.
\end{lemma}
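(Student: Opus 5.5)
The plan is a double induction on $d$ and $|V|$, shedding a vertex of the base at each step and checking that both the deletion and the link again satisfy the hypotheses of the lemma. Write $\Gamma := \restr{\Delta}{V}$, so that $\Delta = \skel{d}(\Gamma * A)$. Its faces are exactly the sets $F \cup S$ with $F \in \Gamma$, $S \subseteq A$ and $|F|+|S| \leq d+1$. We may discard elements of $V$ that are not vertices of $\Delta$; this only matters when $k=-1$, and it preserves $k$-fullness on the smaller ground set.

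\textbf{Base case.} Suppose $|V| \leq k+1$. Then $k$-fullness makes $V$ itself a face of $\Gamma$, so $\Gamma = 2^{V}$ and $\Delta = \Sk{d}{V \sqcup A}$. This is a skeleton of a simplex, hence vertex decomposable by \cite{pb80}. (The degenerate case $V=\emptyset$ is included here.) From now on assume $|V| \geq k+2$.

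\textbf{Inductive step.} Pick any vertex $v \in V$ and use it as the shedding vertex.

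\emph{Deletion.} We have $\del_v{\Delta} = \skel{d}(\restr{\Gamma}{V \setminus v} * A)$. This complex is coned with respect to $A$, and it is $k$-full on $(V\setminus v) \sqcup A$. The key check is that it is pure of dimension $d$. Take a face $F \cup S$ with $v \notin F$.
\begin{itemize}
\item If $|F| \leq k+1$, then since $|V \setminus v| \geq k+1$, $k$-fullness lets us enlarge $F$ inside $V \setminus v$ to a face $F'$ of size exactly $k+1$. As $|A| \geq d-k$, the set $F' \cup A$ has at least $d+1$ elements, so $F \cup S$ extends to a face of size $d+1$ avoiding $v$.
\item If $|F| > k+1$, then $|F|+|A| \geq d+1$ already, and we extend $F \cup S$ using elements of $A$ alone.
\end{itemize}
So $\del_v{\Delta}$ is pure $d$-dimensional, and hence $v$ is a gluable face of $\Delta$. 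Moreover $\del_v{\Delta}$ satisfies all hypotheses of the lemma with the same $d$, $k$, $A$ and a smaller base, so it is vertex decomposable by induction on $|V|$.

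\emph{Link.} A direct check from the description of the faces gives
\[
\lk_v{\Delta} = \skel{d-1}\bigl((\lk_v{\Gamma}) * A\bigr).
\]
This is a complex coned with respect to $A$ with base $\lk_v{\Gamma}$ on $V\setminus v$. It is pure $(d-1)$-dimensional, because every face of $\Delta$ containing $v$ lies in a facet of $\Delta$ containing $v$. Since $\Gamma$ is $k$-full, $\lk_v{\Gamma}$ is $(k-1)$-full on $V \setminus v$; when $k=-1$ we simply keep $k=-1$. The cone-vertex condition is preserved: $|A| \geq d-k = (d-1)-(k-1)$, and in the case $k=-1$ we have $|A| \geq d+1 > d$. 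Hence the link is vertex decomposable by induction on $d$. The bottom case $d=0$ (or a link equal to the empty complex) is trivial.

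\textbf{Main obstacle.} The step I expect to need the most care is the purity of the deletion. This is exactly where the inequality $|A| \geq d-k$ is used, and it is why the base case $|V| \leq k+1$ has to be split off: without $|V\setminus v| \geq k+1$, small faces could not be padded inside $V$. The remaining work is bookkeeping with the edge cases $k=-1$ and non-vertex ground elements, which I would handle with the $\max(k-1,-1)$ convention above.
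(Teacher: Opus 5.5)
Your proof is correct and follows essentially the same route as the paper. You shed an arbitrary base vertex $v$. The deletion $\del_v{\Delta} = \skel{d}(\restr{\Gamma}{V \setminus \{v\}} * A)$ is again coned and $k$-full with a smaller base. The link $\lk_v{\Delta} = \skel{d-1}(\lk_v{\Gamma} * A)$ is coned and $(k-1)$-full, with $|A| \geq (d-1)-(k-1)$.

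The differences are cosmetic. You check gluability directly as purity of the deletion, padding small faces inside $V \setminus \{v\}$ via fullness; this is why you split off the base case $|V| \leq k+1$. The paper instead uses the gluing criterion, swapping $v$ for an unused cone vertex or for a base vertex $u \notin F$. You also induct on $(d,|V|)$ rather than on $|V|$ alone.
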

\begin{proof}
Let $\Gamma := \restr{\Delta}{V}$, so that $\Delta = \skel{d}(\Gamma * A)$ and the facets of $\Delta$ are the sets $F \cup S$ with $F \in \Gamma$, $S \subseteq A$ and $|F| + |S| = d+1$. We induct on $|V|$, for all $k$, $d$ and $A$ at once, and may assume that every element of $V$ is a vertex of $\Delta$, since removing the other elements from $V$ changes nothing. If $\Delta$ is a simplex there is nothing to prove, and if $V = \emptyset$ then $\Delta = \skel{d}(2^{A})$ is a skeleton of a simplex, which is vertex decomposable \cite{pb80}. So assume $\Delta$ is not a simplex and take any $v \in V$. We show that $v$ is a shedding vertex of $\Delta$.

We first check that $v$ is gluable. Take a facet $F \cup S$ of $\Delta$ with $v \in F$. If $S \neq A$, pick $a \in A \setminus S$. Then $(F \setminus \{v\}) \cup (S \cup \{a\})$ is a facet of $\Delta$ adjacent to $F \cup S$, as $F \setminus \{v\} \in \Gamma$. If $S = A$, then $|F| = d+1-|A| \leq k+1$, and $V \not\subseteq F$ since $\Delta$ is not a simplex, so we can pick $u \in V \setminus F$. The set $(F \setminus \{v\}) \cup \{u\}$ has cardinality at most $k+1$ and lies in $V$, so it is a face of $\Gamma$ by $k$-fullness, and $(F \setminus \{v\}) \cup \{u\} \cup A$ is a facet of $\Delta$ adjacent to $F \cup S$ avoiding $v$.

Next we look at the deletion. The facets of $\del_v{\Delta}$ are the facets $F \cup S$ with $v \notin F$, so $\del_v{\Delta} = \skel{d}(\del_v{\Gamma} * A)$ is pure of dimension $d$, coned with respect to $A$, and still $k$-full. By induction it is vertex decomposable.

Finally we look at the link. The facets of $\lk_v{\Delta}$ are the sets $(F \setminus \{v\}) \cup S$ with $v \in F$, so $\lk_v{\Delta} = \skel{d-1}(\lk_v{\Gamma} * A)$ is pure of dimension $d-1$ and coned with respect to $A$ on $V' \sqcup A$, where $V'$ is the vertex set of $\lk_v{\Gamma}$. If $k \geq 0$, it is $(k-1)$-full, since $T \cup \{v\}$ is a face of $\Delta$ whenever $|T| \leq k$, and $|A| \geq d-k = (d-1)-(k-1)$. If $k = -1$, then $|A| \geq d+1 > (d-1)+1$. Either way $\lk_v{\Delta}$ is vertex decomposable by induction.
\end{proof}

The case $k = 0$ says that $\skel{d}(G * A)$ is vertex decomposable for any graph $G$ whenever $|A| \geq d$, which strengthens \cite[Lemma 4.3]{ghkklotw25} from $1$-decomposable to vertex decomposable.

\begin{example}
Take $G$ on vertex set $[5]$ with edges $12$ and $34$ (so $5$ is an isolated vertex), and let $A = \{a,b\}$. Then
\[
\skel{2}(G * A) = \langle ab1,ab2,ab3,ab4,ab5,a12,b12,a34,b34 \rangle
\]
is coned with respect to $A$, is $0$-full, and $|A| = 2 \geq 2 - 0$, so it is vertex decomposable by \cref{lem:conedfullVD}. Indeed any vertex of $[5]$ is a shedding vertex.
\end{example}

\section{Main result}
\label{sec:main}

\subsection{Adding triangles to the base}
\label{subsec:triangles}
In this subsection we grow the base of a coned complex while maintaining $k$-decomposability. Throughout, $\Delta$ is a pure $d$-dimensional complex on $V \sqcup A$ coned with respect to $A$ with base $\Gamma := \restr{\Delta}{V}$. The faces of $\Delta$ are then the sets $F \cup S$ with $F \in \Gamma$, $S \subseteq A$ and $|F| + |S| \leq d+1$. For $F \subseteq V$ not a face of $\Gamma$, \newword{adding $F$ to the base} means replacing $\Gamma$ by $\Gamma \cup 2^{F}$ and $\Delta$ by $\skel{d}((\Gamma \cup 2^{F}) * A)$. The new faces of cardinality $d+1$ are the sets $F' \cup S$ with $F' \subseteq F$ not a face of $\Gamma$, $S \subseteq A$ and $|F'| + |S| = d+1$. We will always have $|F| + |A| \geq d+1$, in which case the new complex is again pure.

\begin{lemma}
\label{lem:addtriangle}
Let $d \geq 2$ and $k \geq 2$. Let $\Delta$ be a pure $d$-dimensional $k$-decomposable complex on $V \sqcup A$, coned with respect to $A$ with base $\Gamma$, where $|A| \geq d-2$. Let $uvw \subseteq V$ be a triangle that is not a face of $\Gamma$ such that at least two of the edges $uv,vw,uw$ are faces of $\Gamma$, and let $\Delta'$ be the complex obtained by adding $uvw$ to the base. Then we can order the facets of $\Delta' \setminus \Delta$ as $F_1,\dots,F_q$ so that $\Delta + \langle F_1,\dots,F_i \rangle$ is $k$-decomposable for all $1 \leq i \leq q$. In particular, $\Delta'$ is $k$-decomposable and coned with respect to $A$.
\end{lemma}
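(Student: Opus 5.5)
We add all the new facets through a single shedding face. At each intermediate stage the deletion of that face is exactly $\Delta$, and its link is a prefix of a vertex decomposable skeleton of a simplex. By symmetry assume $uv, vw \in \Gamma$. The new facets of $\Delta'$ are the sets $F' \cup S$ with $F' \subseteq uvw$, $F' \notin \Gamma$, $S \subseteq A$ and $|F'|+|S| = d+1$. Since every proper subset of $uvw$ other than possibly $uw$ is in $\Gamma$, there are two cases.

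\emph{Case 1: $uw \in \Gamma$.} The only new set is $F' = uvw$, so the new facets are $uvw \cup S$ with $|S| = d-2$. Take $E := uvw$ as the candidate shedding face; here $\dim E = 2 \leq k$, which is where $k \geq 2$ is used. Write the new facets as $uvw \cup S_1,\dots,uvw\cup S_q$. The $S_j$ are the facets of $\Sk{d-3}{A}$, a skeleton of a simplex, so it is vertex decomposable. By \cref{lem:ordering} we order them so that each $\langle S_1,\dots,S_i\rangle$ is $k$-decomposable. In $\Delta_i := \Delta + \langle uvw\cup S_1,\dots,uvw\cup S_i\rangle$ we have $\del_E \Delta_i = \Delta$ and $\lk_E \Delta_i = \langle S_1,\dots,S_i\rangle$, both $k$-decomposable, and the deletion is pure of dimension $d$.

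\emph{Case 2: $uw \notin \Gamma$.} Take $E := uw$, with $\dim E = 1 \leq k$. Every new facet contains $uw$, because it contains $F' \in \{uw, uvw\}$. The new facets are the sets $uw \cup T$ with $T$ a $(d-1)$-subset of $\{v\}\sqcup A$, so they correspond to the facets of $\Sk{d-2}{\{v\}\sqcup A}$. This is again vertex decomposable, and when $|A| = d-2$ it is the single simplex $\{v\}\cup A$. We order these facets by \cref{lem:ordering}. As before, $\del_E\Delta_i = \Delta$ and the link is a $k$-decomposable prefix.

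In both cases it remains to check gluability of $E$ in each $\Delta_i$ via \cref{lem:shedd}, and I expect this to be the only real work. Let $H = E \cup T$ be a facet of $\plk_E\Delta_i$ and let $f \in E$; we need a facet of $\Delta_i$ containing $H \setminus \{f\}$ but not $f$. Each $H\setminus\{f\}$ contains one of the edges $vw$ or $uv$, which lie in $\Gamma$, together with a subset of $A$.
\begin{itemize}
\item In Case 1 with $f = u$, the set $vw \cup S$ is a face of $\Delta$ of size $d$. If some $a \in A \setminus S$ exists, then $vw \cup S \cup \{a\}$ is a facet of $\Delta$. Otherwise $S = A$, and by purity $vw\cup A$ lies in a facet $F\cup S''$ of $\Delta$ with $vw \subseteq F \in \Gamma$. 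The extra vertex cannot be $u$, because $uvw \notin \Gamma$.
\item In Case 1 with $f = w$ the same argument applies with $uv$ in place of $vw$.
\item In Case 1 with $f = v$ the same argument applies with $uw \in \Gamma$, which is available in this case.
\end{itemize}
In Case 2 we only need $f \in \{u,w\}$, and $H \setminus \{f\}$ is one of $w\cup T$ or $u \cup T$. If $v \in T$, then $H\setminus\{f\}$ is one of $vw \cup (T\setminus v)$ or $uv \cup (T \setminus v)$, and we apply the argument above. If $v \notin T$, then $T \subseteq A$, and we complete $w \cup T$ (respectively $u \cup T$) to $vw \cup T$ (respectively $uv \cup T$), which is a face of $\Delta$ of size $d+1$ avoiding $f$.

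Having verified gluability, each $\Delta_i$ is $k$-decomposable by \cref{def:kdec}. Finally, $\Delta_q = \Delta'$ is coned with respect to $A$ by construction.
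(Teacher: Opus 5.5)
Your proof is correct and follows the paper's argument. You use the same single shedding face ($uvw$, or the missing edge $uw$), the deletion equal to $\Delta$, and the link a prefix of a skeleton of a simplex ordered via \cref{lem:ordering}. The only differences are cosmetic: you check gluability directly in each $\Delta_i$ (your witnesses already lie in $\Delta$) rather than checking it in $\Delta'$ and invoking \cref{lem:cheatcheckgluable}, and your case analysis for avoiding $f$ could be replaced by the one-line observation that any facet of $\Delta$ containing $H\setminus\{f\}$ cannot contain $f$, since $E\notin\Delta$.
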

\begin{proof}
Let $F := uvw$ if all three edges are in $\Gamma$, and otherwise let $F$ be the missing edge, say $F = uw$ with $uv, vw \in \Gamma$. In both cases $\dim(F) \leq k$ and $F$ is not a face of $\Delta$. The facets of $\Delta' \setminus \Delta$ are the sets $uvw \cup S$ with $S \subseteq A$ and $|S| = d-2$, together with the sets $uw \cup S$ with $|S| = d-1$ in the second case (these exist only when $|A| \geq d-1$). Every one of them contains $F$.

We first show that $F$ is gluable in $\Delta'$. The facets of $\Delta'$ containing $F$ are those of $\Delta' \setminus \Delta$. Let $G$ be such a facet and $f \in F$. Since $uvw \setminus \{f\}$ is an edge of $\Gamma$, the set $G \setminus \{f\}$ is a face of $\Delta$. Let $G'$ be a facet of $\Delta$ containing it. Then $G \cap G' = G \setminus \{f\}$, as $F$ is not a face of $\Delta$, so $F$ is gluable by \cref{lem:shedd}. Next, $\del_F{\Delta'} = \Delta$, and the link of $F$ in $\Delta'$ is $\skel{d-3}(2^{A})$ in the first case and $\skel{d-2}(2^{\{v\} \cup A})$ in the second, a skeleton of a simplex in either case and hence vertex decomposable \cite{pb80}.

Now order the facets of $\lk_F{\Delta'}$ as $G_1,\dots,G_q$ with each $\langle G_1,\dots,G_i \rangle$ $k$-decomposable, using \cref{lem:ordering}, and set $F_i := G_i \cup F$. By \cref{lem:cheatcheckgluable}, $F$ is gluable in $\Delta + \langle F_1,\dots,F_i \rangle$ for each $i$, with deletion $\Delta$ and link $\langle G_1,\dots,G_i \rangle$, both $k$-decomposable. Hence each $\Delta + \langle F_1,\dots,F_i \rangle$ is $k$-decomposable, and $\Delta'$ is coned with respect to $A$ by construction.
\end{proof}

Since the conclusion of \cref{lem:addtriangle} has the same form as its hypothesis, it can be applied repeatedly.

\begin{corollary}
\label{cor:addtriangles}
Let $d \geq 2$, $k \geq 2$, and let $\Delta$ be a pure $d$-dimensional $k$-decomposable complex coned with respect to $A$ with $|A| \geq d-2$. Let $\restr{\Delta}{V} = \Gamma_1 \subseteq \Gamma_2 \subseteq \dots \subseteq \Gamma_m$ be a sequence of complexes on $V$ where each $\Gamma_{j}$ with $j \geq 2$ is obtained from $\Gamma_{j-1}$ by adding a missing triangle that has at least two edges in $\Gamma_{j-1}$. Then we can extend $\Delta$ to $\skel{d}(\Gamma_m * A)$ one facet at a time while maintaining $k$-decomposability.
\end{corollary}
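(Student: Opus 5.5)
Induction on $m$, applying \cref{lem:addtriangle} once for each step $\Gamma_{j-1} \subseteq \Gamma_j$. The claim to carry along is: for each $j$, the complex $\Delta_j := \skel{d}(\Gamma_j * A)$ is pure $d$-dimensional, $k$-decomposable, coned with respect to $A$ with base $\Gamma_j$, and it is reached from $\Delta$ one facet at a time with every intermediate complex $k$-decomposable. For $j=1$, note that $\Delta$ coned with respect to $A$ means $\Delta = \skel{d}(\restr{\Delta}{V} * A) = \skel{d}(\Gamma_1 * A)$, so $\Delta_1 = \Delta$ and there is nothing to do.

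For the inductive step, assume the claim for $j-1$. By hypothesis $\Gamma_j = \Gamma_{j-1} \cup 2^{uvw}$ for some triangle $uvw \subseteq V$ that is not a face of $\Gamma_{j-1}$ and has at least two of its edges in $\Gamma_{j-1}$. Then $\Delta_j$ is exactly the complex obtained from $\Delta_{j-1}$ by adding $uvw$ to the base. All hypotheses of \cref{lem:addtriangle} hold for $\Delta_{j-1}$: we have $d \geq 2$ and $k \geq 2$, $\Delta_{j-1}$ is pure $d$-dimensional and $k$-decomposable, it is coned with respect to $A$ with base $\Gamma_{j-1}$, and $|A| \geq d-2$ because $A$ never changes. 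The lemma gives an ordering $F_1,\dots,F_q$ of the facets of $\Delta_j \setminus \Delta_{j-1}$ such that every $\Delta_{j-1} + \langle F_1,\dots,F_i\rangle$ is $k$-decomposable. It also gives that $\Delta_j$ is $k$-decomposable and coned with respect to $A$. Purity holds because $|uvw| + |A| \geq 3 + d - 2 = d+1$, as noted before the lemma. Appending this ordering to the one already built from $\Delta$ to $\Delta_{j-1}$ proves the claim for $j$. At $j=m$ we reach $\skel{d}(\Gamma_m * A)$.

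The one point that needs care is that the base of $\Delta_{j-1}$ is really $\Gamma_{j-1}$. The base is $\restr{\Delta_{j-1}}{V}$, which is the $d$-skeleton of $\Gamma_{j-1}$, not $\Gamma_{j-1}$ itself. However, $V$ is disjoint from $A$, and any face of $\Gamma_{j-1}$ of dimension larger than $d$ contributes nothing to $\skel{d}(\Gamma_{j-1} * A)$. So $\skel{d}(\Gamma_{j-1} * A) = \skel{d}(\skel{d}(\Gamma_{j-1}) * A)$, and replacing $\Gamma_{j-1}$ by its $d$-skeleton changes nothing.

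It also needs checking that $uvw$ is still a missing triangle with at least two edges present in this skeleton. Since $d \geq 2$, taking the $d$-skeleton keeps all faces of dimension at most $2$. Hence the edges of $uvw$ lying in $\Gamma_{j-1}$ are still present, and $uvw$ is still missing. So \cref{lem:addtriangle} applies as stated.
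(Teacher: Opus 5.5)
Your proof is correct and is exactly the paper's argument: the paper only remarks that the conclusion of \cref{lem:addtriangle} has the same form as its hypothesis, so the lemma can be applied repeatedly, and your induction on $j$ with concatenated orderings spells out that iteration. Your caution about the base being the $d$-skeleton of $\Gamma_{j-1}$ is harmless but unnecessary. Since $\Gamma_1 = \restr{\Delta}{V}$ has dimension at most $d$ and only triangles are added, $\Gamma_{j-1}$ has dimension at most $d$, so $\restr{\Delta_{j-1}}{V} = \Gamma_{j-1}$ exactly.
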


\begin{example}
Let $d = 3$ and $A = \{a\}$, and let the base be the $2$-dimensional complex $\Gamma = \langle 123, 234 \rangle$. Then $\Delta = \skel{d}(\Gamma * a) = \langle 123a, 234a \rangle$ is vertex decomposable. The triangle $124$ has exactly two edges $12,24$ in $\Gamma$ and $14 \not\in \Gamma$. Adding $124$ to the base means adding the single facet $124a$, using $14$ as the shedding face: we have $\lk_{14}(\Delta + 124a) = \langle 2a \rangle$ and $\del_{14}(\Delta + 124a) = \Delta$. Now $134$ has all three edges $13,34,14$ in the new base $\Gamma \cup 2^{124}$, so we can add the facet $134a$ using $134$ as the shedding face, whose link is $\langle a \rangle$. We end up with $\langle 123a,234a,124a,134a \rangle$, the cone over the boundary of the tetrahedron $1234$.
\end{example}

The example shows the difference between the two cases. When $|A| = d-2$, adding a missing edge alone contributes no new facets, so an edge can only enter the base as part of a triangle, with the edge as the shedding face. This is the mechanism behind \cite[Lemma 4.5]{ghkklotw25}. A triangle whose three edges are already present requires the triangle itself as the shedding face, hence $k \geq 2$.

\subsection{Proof of the main theorem}
\label{subsec:mainproof}
We need two results from the literature.

\begin{lemma}[{\cite[Section 7.2]{bj92}}]
\label{lem:skelconn}
Let $\Delta$ be a pure shellable complex of dimension at least $1$. Then $\skel{1}(\Delta)$ is a connected graph.
\end{lemma}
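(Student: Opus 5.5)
The plan is to induct on the number of facets using a shelling order, rather than work with arbitrary shellings. Let $\Delta$ be pure, shellable, and of dimension $d \geq 1$, with shelling $F_1,\dots,F_s$. The base case is immediate: $\Delta_1=\langle F_1\rangle$ is a simplex, so its $1$-skeleton is a complete graph and in particular connected. For the induction step, suppose $\skel{1}(\Delta_{i-1})$ is connected, where $\Delta_{i-1}=\langle F_1,\dots,F_{i-1}\rangle$. By the shelling condition, $F_i \cap \Delta_{i-1}$ is pure of dimension $d-1$. Since $d \geq 1$, this intersection contains a face of cardinality $d \geq 1$, so $F_i$ shares at least one vertex $x$ with $\Delta_{i-1}$.

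All vertices of $F_i$ are pairwise joined by edges inside the simplex $2^{F_i}$, and $x$ is one of them. Hence every vertex of $F_i$ is joined to $x$, and so lies in the connected component of $\skel{1}(\Delta_{i-1})$ containing $x$. It follows that $\skel{1}(\Delta_i)=\skel{1}(\Delta_{i-1})\cup\skel{1}(2^{F_i})$ is connected.

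The only point needing care is that the intersection is nonempty in the sense of containing a vertex. The shelling condition guarantees a $(d-1)$-dimensional face, and when $d \geq 1$ such a face is not the empty face. This is exactly where the hypothesis $\dim\Delta\ge 1$ enters: for $d=0$, a set of isolated points is shellable but its $1$-skeleton is disconnected. There is no real obstacle beyond this. Alternatively, one can quote that shellable complexes are strongly connected (the facets are connected through codimension-one adjacencies), which immediately gives a connected $1$-skeleton when $d\ge1$.
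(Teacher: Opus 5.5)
Your proof is correct. Along the shelling order, each new facet $F_i$ meets $\langle F_1,\dots,F_{i-1}\rangle$ in a pure $(d-1)$-dimensional complex. For $d\ge 1$ that complex contains a vertex $x$. Since the simplex $2^{F_i}$ has a complete $1$-skeleton on $d+1\ge 2$ vertices, all of $F_i$ joins the component containing $x$. Your remark on $d=0$ also correctly pinpoints why the dimension hypothesis is needed.

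The paper gives no proof of this lemma. It only cites Björner's survey, where the fact sits among the standard consequences of shellability. It also follows, for instance, from strong connectivity of shellable complexes, or from a shellable $d$-complex being homotopy equivalent to a wedge of $d$-spheres, which is connected for $d\ge 1$. Compared with the citation, your induction is self-contained and uses nothing beyond the definition of a shelling, which is all the paper needs at this point.
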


\begin{theorem}[{\cite[Theorem 2.9 and Corollary 2.11]{cdgo22}}]
\label{thm:cdgo}
Let $\Delta$ be a pure $d$-dimensional vertex decomposable complex with vertex set $[m]$. Then we can order the facets of $\Sk{d}{m} \setminus \Delta = \{F_1,\dots,F_t\}$ so that $\Delta + \langle F_1,\dots,F_i \rangle$ is vertex decomposable for all $1 \leq i \leq t$.
\end{theorem}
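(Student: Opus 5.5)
We prove a slightly stronger statement by induction on $m+d$: \emph{if $\Delta$ is a pure $d$-dimensional vertex decomposable complex whose vertex set is contained in $[m]$ (with $m \geq d+1$), then the facets of $\Sk{d}{m}\setminus\Delta$ can be ordered so that every intermediate complex is vertex decomposable.} Allowing the vertex set to be a proper subset of $[m]$ is needed because the link in the recursion below may not use every remaining element.

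\emph{Missing vertices.} Let $x \in [m]$ be a non-vertex of $\Delta$ and $R$ any $(d-1)$-face of $\Delta$. In $\Delta + (R \cup \{x\})$ the vertex $x$ is a shedding vertex. It is gluable by \cref{lem:shedd}, since $R$ lies in a facet of $\Delta$ adjacent to $R\cup\{x\}$. Its deletion is $\Delta$ and its link is the simplex $2^R$. Repeating this, we may assume the vertex set is exactly $[m]$. If $d=0$, or if $m = d+1$ (so that $\Delta$ is the single simplex $[m]$), there is nothing more to do.

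\emph{Main step.} Otherwise $\Delta$ is not a simplex, so it has a shedding vertex $v$. Set $D := \del_v\Delta$ and $L := \lk_v\Delta$. Then $D$ is pure $d$-dimensional and vertex decomposable, and $L$ is pure $(d-1)$-dimensional and vertex decomposable. Gluability of $v$ means every facet $G\cup\{v\}$ has an adjacent facet $G \cup \{y\}$ with $y\neq v$, so every vertex other than $v$ lies in $D$. Hence $D$ has vertex set $[m]\setminus\{v\}$, of size $m-1 \geq d+1$.

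\emph{Phase one.} By induction on $m$, add the facets of $\Sk{d}{[m]\setminus\{v\}}\setminus D$ one at a time, keeping the deletion vertex decomposable. None of these facets contains $v$. So throughout this phase the star of $v$ and the link $L$ are unchanged, and the deletion stays pure of dimension $d$. The adjacencies required by \cref{lem:shedd} persist, since facets are only added. Thus $v$ stays a shedding vertex, and each intermediate complex is vertex decomposable.

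\emph{Phase two.} Now the deletion equals $\Sk{d}{[m]\setminus\{v\}}$, which contains every $(d-1)$-subset of $[m]\setminus\{v\}$. The vertex set of $L$ lies in $[m]\setminus\{v\}$, so by induction on $d$ we can order the facets of $\Sk{d-1}{[m]\setminus\{v\}}\setminus L$ as $G_1,\dots,G_t$ with each $L+\langle G_1,\dots,G_i\rangle$ vertex decomposable. We add the facets $G_i\cup\{v\}$ in this order. The deletion of $v$ stays equal to the full skeleton (which is vertex decomposable), and the link at each stage is $L+\langle G_1,\dots,G_i\rangle$. Gluability is automatic: for any facet $G\cup\{v\}$, the set $G$ is a $(d-1)$-face of the full skeleton, so it lies in a facet avoiding $v$ adjacent to $G\cup\{v\}$. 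After phase two the complex is all of $\Sk{d}{m}$.

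The step needing the most care is the order of the two phases. If we completed the link before the deletion, the new facets $G\cup\{v\}$ might have no adjacent facet avoiding $v$, and $v$ would stop being gluable. Completing the deletion first removes this problem entirely. The other potential gap, that the link's vertex set might be smaller than $[m]\setminus\{v\}$, is handled by the strengthened hypothesis together with the missing-vertex step.
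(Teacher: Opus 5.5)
Your proof is correct and follows essentially the argument of \cite{cdgo22}, which the paper cites rather than reproves: there a shedding vertex $v$ is placed last in the vertex order and revlex-smallest missing facets are added, which amounts to your two phases---first completing the deletion of $v$ while its link stays frozen, then completing the link once the deletion is the full $d$-skeleton on the remaining vertices, so that gluability of $v$ is automatic. Your strengthening to vertex sets contained in $[m]$, with the preliminary step of adding $R \cup \{x\}$ for each missing element $x$, correctly handles the one technical point, namely that the link of $v$ need not use every remaining vertex.
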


We now prove the main result of the paper.

\begin{theorem}
\label{thm:main}
Let $d \geq 2$ and $k \geq 1$, and let $\Delta$ be a pure $d$-dimensional $k$-decomposable complex with vertex set $[n]$. Then we can order the facets of $\Sk{d}{n+d-2} \setminus \Delta = \{F_1,\dots,F_t\}$ so that $\Delta + \langle F_1,\dots,F_i \rangle$ is $k$-decomposable for all $1 \leq i \leq t$.
\end{theorem}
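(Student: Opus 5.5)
The plan has three phases: cone $\Delta$ by $d-2$ new vertices, fill in all triangles of the base, and finish with \cref{thm:cdgo}. First I dispose of the case $k=1$. It is exactly the theorem of \cite{ghkklotw25} quoted in the introduction: any pure $d$-dimensional $1$-decomposable complex on $n$ vertices extends to $\Sk{d}{n+d-2}$ one facet at a time while maintaining $1$-decomposability. I would cite it directly, since \cref{lem:addtriangle} needs $k\geq 2$ and so cannot handle $k=1$. So assume $k\geq 2$. Let $A$ be a set of $d-2$ new vertices, so $V=[n]$ and $V\sqcup A=[n+d-2]$. By \cref{lem:conekdec}, applied with $\Gamma=\Delta$, we can add the facets of $\skel{d}(\Delta * A)\setminus\Delta$ one at a time while keeping $k$-decomposability. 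The resulting complex is coned with respect to $A$ and has base exactly $\Delta$, because every facet $F\cup S$ with $S\neq\emptyset$ meets $A$. (When $d=2$ we have $A=\emptyset$ and this step is vacuous.)

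Next I grow the base, via \cref{cor:addtriangles}, to the full $2$-skeleton $\skel{2}(2^{V})$ of the simplex on $V$. I need a sequence of triangle additions in which each added triangle has at least two of its edges already in the current base. By \cref{lem:skelconn}, the graph $\skel{1}(\Delta)$ is connected, and its vertex set is all of $[n]$.

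\emph{Stage one (complete the graph).} While the current graph is not complete, some non-adjacent pair $u,w$ lies at distance exactly $2$; otherwise, by connectivity, every pair would be adjacent. Take a common neighbour $v$ and add the triangle $uvw$, which has the two edges $uv,vw$. Each such step adds an edge, so after finitely many steps the graph is complete.

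\emph{Stage two (fill the triangles).} Every remaining missing triple $uvw$ now has all three edges present, so I add these one by one.

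By \cref{cor:addtriangles}, all the corresponding facets can be added one at a time while preserving $k$-decomposability. We arrive at $\Delta'':=\skel{d}(\Gamma_m*A)$, where $\Gamma_m\supseteq\skel{2}(2^{V})$.

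Now I check that $\Delta''$ is $2$-full on $V\sqcup A$. Take any subset $T$ of size at most $3$ and write $T=F\cup S$ with $F\subseteq V$ and $S\subseteq A$. Then $F\in\Gamma_m$, and $|F|+|S|\leq 3\leq d+1$, so $T$ is a face of $\Delta''$. Since $|A|=d-2\geq d-2$, \cref{lem:conedfullVD} with $k=2$ shows that $\Delta''$ is vertex decomposable. Its vertex set is all of $[n+d-2]$. \Cref{thm:cdgo} therefore orders the remaining facets of $\Sk{d}{n+d-2}\setminus\Delta''$ so that every intermediate complex is vertex decomposable, and hence $k$-decomposable.

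Concatenating the three orderings gives the required order of $\Sk{d}{n+d-2}\setminus\Delta$. Every intermediate complex is $k$-decomposable, and each phase adds only facets not already present. The main point needing care is the $k=1$ case, which this plan handles only by appeal to \cite{ghkklotw25}. If a self-contained argument were wanted, I would try replacing the triangle additions of stage two by a $1$-decomposable mechanism. I expect that to be the genuine obstacle, because a triangle whose three edges are already present can only be shed as a $2$-dimensional face.
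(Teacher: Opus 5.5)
Your proposal is correct and follows the paper's proof step for step. You cite \cite{ghkklotw25} for $k=1$, cone by $d-2$ new vertices via \cref{lem:conekdec}, complete the $1$-skeleton through distance-two triangles and then fill every remaining triangle via \cref{cor:addtriangles}, and finish with \cref{lem:conedfullVD} (with $k=2$) and \cref{thm:cdgo}; your explicit check that the final complex is $2$-full on $V \sqcup A$ (using $3 \leq d+1$) just spells out a detail the paper leaves implicit.
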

\begin{proof}
The case $k = 1$ is \cite[Theorem 4.10]{ghkklotw25}, so assume $k \geq 2$. Let $A$ be a set of $d-2$ vertices disjoint from $[n]$, so that $\Sk{d}{n+d-2}$ is the $d$-skeleton of the simplex on $[n] \sqcup A$. Throughout, we add one facet at a time.

We first use \cref{lem:conekdec} to extend $\Delta$ to $\skel{d}(\Delta * A)$ while maintaining $k$-decomposability. This complex is coned with respect to $A$ with base $\Delta$. Next we add triangles to the base using \cref{cor:addtriangles}. Since $\Delta$ is shellable, $\skel{1}(\Delta)$ is connected by \cref{lem:skelconn}. While the $1$-skeleton of the current complex is not complete, there are $u,w \in [n]$ at distance $2$ with a common neighbor $v$, so the triangle $uvw$ is not a face and has exactly two edges in the base. Adding it adds the edge $uw$ to the skeleton. Once the skeleton is complete, every missing triangle on $[n]$ has all three edges in the base, and as $k \geq 2$ we add them one at a time. We end with $\Delta' := \skel{d}(\Gamma * A)$, where $\Gamma \supseteq \Delta$ is $2$-full.

Now $\Delta'$ is coned with respect to $A$ and $2$-full. As $|A| = d-2$, \cref{lem:conedfullVD} shows that $\Delta'$ is vertex decomposable. Here, the vertex set of $\Delta'$ is $[n] \sqcup A$ and has cardinality $n+d-2$. By \cref{thm:cdgo} we extend $\Delta'$ to $\Sk{d}{n+d-2}$ while maintaining vertex decomposability, and in particular $k$-decomposability.
\end{proof}

\begin{remark}
When $d = 2$ we have $A = \emptyset$, and \cref{thm:main} says that any shellable $2$-dimensional complex on $[n]$ can be extended to $\Sk{2}{n}$ while maintaining shellability, recovering the $d = 2$ case of Simon's conjecture \cite{be94}. Here adding triangles already reaches $\Sk{2}{n}$.
\end{remark}

\section{The number of new vertices is tight}
\label{sec:tight}

In this section we show that the $d-2$ new vertices in \cref{thm:main} cannot be reduced to $d-3$, for any $d \geq 3$ and any $k \geq 1$. The examples come from the construction that Bolognini and Sentinelli \cite{bs26} used to disprove Simon's conjecture.

\subsection{An obstruction to adding a facet}
\label{subsec:obstruction}
A \newword{minimal nonface} of a complex $\Delta$ on ground set $V$ is a subset of $V$ that is not a face of $\Delta$ but all of whose proper subsets are. The following observation is the reason austere complexes in the sense of \cite{bs26} cannot be extended. We state it in our facet extension language: the new facet is excluded no matter where it would appear in a shelling.

\begin{lemma}
\label{lem:twononfaces}
Let $\Delta$ be a pure $d$-dimensional complex and let $F$ be a $(d+1)$-subset of the ground set that is not a face of $\Delta$. If $F$ contains two distinct minimal nonfaces of $\Delta$, then $\Delta + F$ is not shellable.
\end{lemma}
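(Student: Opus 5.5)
The plan is to argue by contradiction against any shelling of $\Delta + F$. The facets of $\Delta + F$ are those of $\Delta$ together with $F$; $F$ is a new facet because it is not a face of $\Delta$ and has the maximal cardinality $d+1$. Suppose $G_1,\dots,G_s$ is a shelling of $\Delta + F$ and $F = G_j$. I will derive a contradiction from the restriction that shellability places on $G_j$.

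Let $N_1 \neq N_2$ be two minimal nonfaces of $\Delta$ contained in $F$. The key step is to show that $F$ has at most one facet missing in $\Delta$. Every proper subset of $F$ containing $N_1$ fails to lie in $\Delta$, since faces are closed under subsets, and likewise for $N_2$. Now consider a $d$-subset $F\setminus\{x\}$ of $F$:
\begin{itemize}
\item if $x \notin N_1$, then $F \setminus \{x\} \supseteq N_1$, so $F \setminus \{x\}$ is not a face of $\Delta$;
\item if $x \notin N_2$, then similarly $F \setminus \{x\}$ is not a face of $\Delta$.
\end{itemize}
Minimal nonfaces are incomparable, so $N_1 \not\subseteq N_2$ and $N_1 \cup N_2 \neq$ either one. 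Consequently, $F \setminus \{x\}$ can be a face of $\Delta$ only if $x \in N_1 \cap N_2$. More to the point, since no facet of $\Delta$ contains $N_1$ or $N_2$, no subset of $F$ containing $N_1$ or $N_2$ appears in $\Delta$ at all. Hence $\langle G_1,\dots,G_{j-1}\rangle \cap 2^{F} \subseteq \Delta \cap 2^F$, and this intersection avoids every superset of $N_1$ and every superset of $N_2$.

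Next I split on the position $j$ of $F$. If $j = 1$ and $s > 1$, I take the first later facet $G_i$ in the order. The intersection $\langle G_1,\dots,G_{i-1}\rangle \cap 2^{G_i}$ must contain a $d$-face $G_i \cap G_l$. The facet that this $d$-face reaches back to is either $F$ or a facet of $\Delta$. This case seems awkward, so I will handle it more cleanly by using the standard characterization: in a shelling, the intersection of $G_j$ with the earlier facets is generated by $d$-faces $G_j\setminus\{x\}$ for $x$ in some set $R_j$ (the restriction set), and the faces of $G_j$ not lying in earlier facets are exactly the supersets of $R_j$. I apply this to $F$ at position $j$. The new faces of $F$ form the interval $[R_j, F]$, and every superset of $N_1$ or $N_2$ inside $F$ is new, because it is not in $\Delta$. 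Therefore $R_j \subseteq N_1$ and $R_j \subseteq N_2$, while at the same time all new faces contain $R_j$.

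The conflict is now between minimality and this interval. The subset $N_1 \cap N_2$ is a proper subset of $N_1$, hence a face of $\Delta$. Take $y \in N_1 \setminus N_2$; the set $N_1 \setminus \{y\}$ is a face of $\Delta$. I expect the real obstacle to be exactly here: showing that some face of $F$ containing $R_j$ is old. The candidate is $N_1 \cap N_2 \supseteq R_j$, which is a face of $\Delta$. But is it contained in an earlier facet $G_l$, $l<j$? For that I use the remaining facets after $F$. A cleaner route to try first is the case $j=s$, where all of $\Delta$ precedes $F$. Then old faces are exactly $\Delta \cap 2^F$, so new faces are exactly the nonfaces. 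These must form the interval $[R, F]$, so the unique minimal nonface in $F$ is $R$, contradicting $N_1 \neq N_2$. For general $j$, I would reduce to this case by showing that $F$ can be moved to the end of a shelling: any later $G_i$ attaches to earlier facets along $d$-faces, and a $d$-face $G_i \cap F$ would be a face of $\Delta$. I expect this reordering claim to be the main obstacle, and would try the restriction-set exchange argument for it.
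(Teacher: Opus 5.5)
Your proof is complete only when $F$ is the last facet of the shelling. The reduction of the general case to that one is a genuine gap. The claim that $F$ can be moved to the end cannot come from a generic exchange argument, because it is false in general. In the path $\langle ab,bc,cd\rangle$ the edge $bc$ can never be last. Moreover, the lemma does not assume $\Delta$ is shellable, whereas putting $F$ last would in particular give a shelling of $\Delta$; take $\Delta=\langle ab,cd\rangle$ and $F=bc$, where $\Delta+F$ is shellable but $\Delta$ is not. In the lemma's situation the reordering claim holds only vacuously, so proving it would need the full two-nonface hypothesis; it is as hard as the lemma itself. What you extract at an arbitrary position, $R_j\subseteq N_1\cap N_2$, is not contradictory on its own. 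Faces of $\Delta$ inside $F$ that contain $R_j$ can first appear at $F$ and reappear in later facets of $\Delta$, which is exactly the obstacle you ran into. The case $j=1$, where $R_1=\emptyset$, gives nothing at all.

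The missing idea is to localize at a face: links of shellable complexes are shellable, and a facet sharing no codimension-one face with any other facet obstructs shellability. The paper does this by induction on $d$. Your own bullet observation shows $F\setminus\{y\}\in\Delta$ only if $y$ lies in every minimal nonface of $\Delta$ inside $F$. If no such $y$ exists, $F$ is adjacent to no facet of $\Delta$, which no shelling allows. Otherwise $N_1\setminus\{y\}$ and $N_2\setminus\{y\}$ are distinct minimal nonfaces of $\lk_y{\Delta}$ inside $F\setminus\{y\}$, and induction applies to $\lk_y(\Delta+F)=\lk_y{\Delta}+(F\setminus\{y\})$.

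Alternatively, your case $j=s$ can be salvaged without any reordering. All it really uses is that $2^{F}\cap\Delta$ is generated by $d$-subsets of $F$. Then the nonfaces inside $F$ are exactly the supersets of $R=\{x\in F: F\setminus\{x\}\in\Delta\}$, so $R$ is the only minimal nonface in $F$. That purity follows directly from shellability of $\Delta+F$. Suppose $S$ is a maximal face of $\Delta$ contained in $F$ with $|S|<d$. Then $F\setminus S$ meets every other facet of $\lk_S(\Delta+F)$ only in $\emptyset$. This link is pure of dimension $d-|S|\geq 1$ and has at least two facets, so it is not shellable.
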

\begin{proof}
We induct on $d$. The case $d = 0$ is vacuous, since two distinct minimal nonfaces are nonempty and incomparable. For $y \in F$, the $d$-subset $F \setminus \{y\}$ is a face of $\Delta$ if and only if it contains no minimal nonface of $\Delta$, that is, if and only if $y$ lies in every minimal nonface of $\Delta$ contained in $F$.

If no $y \in F$ does, then $F$ is adjacent to no facet of $\Delta$, so $F$ is an isolated facet of $\Delta + F$, which has other facets and positive dimension. This contradicts shellability, since in a shelling each facet after the first is adjacent to an earlier one.

Otherwise pick $y \in F$ lying in every minimal nonface of $\Delta$ contained in $F$, and let $N_1, N_2$ be two of them. Then $F \setminus \{y\}$ is a $d$-subset of the ground set containing the distinct minimal nonfaces $N_1 \setminus \{y\}$, $N_2 \setminus \{y\}$ of $\lk_y{\Delta}$. As $\lk_y{\Delta}$ is pure of dimension $d-1$, by induction $\lk_y{\Delta} + (F \setminus \{y\}) = \lk_y(\Delta + F)$ is not shellable, and hence neither is $\Delta + F$, since links of shellable complexes are shellable \cite[Lemma 8.7]{z95}.
\end{proof}

\subsection{Inflating a quiet complex}
\label{subsec:inflate}
Recall from \cite{bs26} that a pure $2$-dimensional complex $\Gamma$ on $[m]$ is \newword{quiet} if it has a complete $1$-skeleton and every $4$-subset of $[m]$ contains at most two facets of $\Gamma$. Bolognini and Sentinelli found a quiet, shellable, contractible complex $\Gamma_0$ on $[8]$ with $21$ facets:
\[
\begin{aligned}[t]
\Gamma_0 = \langle &148, 348, 234, 347, 368, 356, 168, 357, 236, 147, 167, \\
&156, 267, 257, 245, 125, 458, 128, 278, 123, 456 \rangle.
\end{aligned}
\]

The following construction is used in \cite{bs26} with classes of two elements, under the name echo, but is not formally defined there in this generality.

\begin{definition}
\label{def:inflate}
Let $\Gamma$ be a simplicial complex on a finite set $M$ and let $\C = (C_i)_{i \in M}$ be a family of pairwise disjoint finite sets, the \newword{classes}. Let $V := \bigsqcup_{i \in M} C_i$. For $F \subseteq V$, the \newword{support} of $F$ is $\supp(F) := \{i \in M : F \cap C_i \neq \emptyset\}$. The \newword{inflation} of $\Gamma$ with respect to $\C$ is the complex
\[
\mE^{\C}(\Gamma) := \{F \subseteq V : \supp(F) \in \Gamma\}
\]
on $V$, and for $t \geq 0$ the \newword{$t$-dimensional inflation} is $\mE^{\C}_t(\Gamma) := \skel{t}(\mE^{\C}(\Gamma))$. Throughout this section $d \geq 3$ is fixed, and we usually omit $\C$ from the notation, in which case every class has cardinality $d-1$.
\end{definition}

The faces of $\mE^{\C}(\Gamma)$ are the subsets of the sets $\bigcup_{i \in \sigma} C_i$ with $\sigma \in \Gamma$, so $\mE^{\C}(\Gamma)$ is obtained from $\Gamma$ by replacing each vertex $i$ with the simplex $2^{C_i}$ (a vertex with an empty class disappears), and $\mE^{\C}_t(\Gamma)$ consists of those faces of cardinality at most $t+1$.

\begin{example}
Let $d = 4$, so each class has three elements, say $C_i = \{3i-2, 3i-1, 3i\}$, and let $\Gamma = \langle 123, 234 \rangle$. Then $\mE_4(\Gamma)$ is generated by the $5$-subsets of $\{1,\dots,9\}$ together with the $5$-subsets of $\{4,\dots,12\}$, which gives $2\binom{9}{5} - \binom{6}{5} = 246$ facets.
\end{example}

When $d = 3$, each $C_i$ has two elements and $\mE_3(\Gamma)$ is exactly the \newword{echo} of $\Gamma$ from \cite[Definition 3.9]{bs26}. In particular $\mE_3(\Gamma_0)$ is the $3$-dimensional complex on $16$ vertices with $280$ facets that disproves Simon's conjecture.

The following lemma and its proof are due to Bolognini and Sentinelli \cite[Theorem 3.11]{bs26} for $d = 3$, and the same argument works for every $d$.

\begin{lemma}
\label{lem:inflate}
Let $\Gamma$ be a quiet complex on $[m]$ and $d \geq 3$. Then $\mE_d(\Gamma)$ is pure of dimension $d$, and:
\begin{enumerate}
    \item the minimal nonfaces of $\mE_d(\Gamma)$ of cardinality at most $d+1$ are exactly the sets $pqr$ with $p \in C_a$, $q \in C_b$, $r \in C_c$ where $abc$ is not a face of $\Gamma$, and
    \item if $N \subseteq V$ with $4 \leq |N| \leq d+1$ is not a face of $\mE_d(\Gamma)$, then $N$ contains two distinct minimal nonfaces of $\mE_d(\Gamma)$.
\end{enumerate}
\end{lemma}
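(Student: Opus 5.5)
The plan is to reduce everything to the structure of $\Gamma$ via supports. The basic observation is that for $F \subseteq V$ with $|F| \leq d+1$, $F$ is a face of $\mE_d(\Gamma)$ if and only if $\supp(F) \in \Gamma$. This holds because $\mE_d(\Gamma)$ and $\mE(\Gamma)$ have the same faces of cardinality at most $d+1$. So nonfaces of small size are detected by their support.

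For purity, take a face $F$ with $\supp(F) = \sigma \in \Gamma$. Since $\Gamma$ is pure of dimension $2$, $\sigma$ lies in a facet $\tau$ with $|\tau| = 3$. The set $\bigcup_{i \in \tau} C_i$ has $3(d-1) \geq d+1$ elements (as $d \geq 2$), and every subset of it is a face of $\mE(\Gamma)$. Hence $F$ extends to a $(d+1)$-subset of it, which is a face of $\mE_d(\Gamma)$. Thus every facet of $\mE_d(\Gamma)$ has cardinality $d+1$.

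Next I would determine the minimal nonfaces of $\Gamma$ itself. The $1$-skeleton of $\Gamma$ is complete, so every nonface has at least $3$ elements. No $4$-set is a minimal nonface, since that would force all four of its triangles to be facets, contradicting quietness. No set of size $\geq 5$ is a minimal nonface either, since its $4$-subsets are nonfaces because $\dim \Gamma = 2$. Hence the minimal nonfaces of $\Gamma$ are exactly the triangles $abc \notin \Gamma$, and every nonface of $\Gamma$ contains such a triangle.

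For (1), consider $p \in C_a$, $q \in C_b$, $r \in C_c$ with $abc \notin \Gamma$. Then $pqr$ is a nonface, and each proper subset has support of size at most $2$, hence is a face. Conversely, let $N$ be a minimal nonface with $|N| \leq d+1$. Then $\supp(N) \notin \Gamma$, so $\supp(N)$ contains a missing triangle $abc$. Choosing $p,q,r \in N$ in the classes $C_a, C_b, C_c$ gives a nonface $pqr \subseteq N$, and minimality forces $N = pqr$.

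For (2), let $N$ be a nonface with $4 \leq |N| \leq d+1$ and set $\sigma = \supp(N) \notin \Gamma$, so $|\sigma| \geq 3$. We split into two cases.
\begin{itemize}
\item If $|\sigma| = 3$, then $\sigma = abc$ is a missing triangle. Since $|N| \geq 4$, some class, say $C_a$, contains two elements $p \neq p'$ of $N$. Picking $q \in N \cap C_b$ and $r \in N \cap C_c$ gives the two distinct minimal nonfaces $pqr$ and $p'qr$ inside $N$.
\item If $|\sigma| \geq 4$, choose a $4$-subset of $\sigma$. By quietness at most two of its four triangles are in $\Gamma$, so at least two are missing. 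Each missing triangle yields a minimal nonface of the form in (1) inside $N$, and these are distinct since their supports differ.
\end{itemize}

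No step looks like a real obstacle. The only points needing care are that quietness rules out $4$-element minimal nonfaces of $\Gamma$ and supplies two missing triangles, and that the bound $|N| \leq d+1$ is what lets us compare with $\mE(\Gamma)$ through supports.
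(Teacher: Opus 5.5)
Your proof is correct and follows essentially the same route as the paper. Purity comes from the $3(d-1)\ge d+1$ elements available over a facet, (1) reduces to supports and uses quietness to rule out $4$-element minimal nonfaces, and (2) uses the same two-case split on $|\supp(N)|$. The only cosmetic difference is in (1): you first classify the minimal nonfaces of $\Gamma$ as its missing triangles and then lift them, whereas the paper argues directly that a minimal nonface of $\mE_d(\Gamma)$ meets each class of its support exactly once.
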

\begin{proof}
Since $\Gamma$ is pure with a complete $1$-skeleton, the support of any face of $\mE_d(\Gamma)$ lies in some facet $abc$ of $\Gamma$, and $|C_a \cup C_b \cup C_c| = 3d-3 \geq d+1$, so every face extends to a facet. Hence $\mE_d(\Gamma)$ is pure of dimension $d$.

For (1), let $N$ be a minimal nonface with $|N| \leq d+1$ and let $\sigma := \supp(N)$. Then $\sigma$ is not a face of $\Gamma$, so $|\sigma| \geq 3$ as the $1$-skeleton of $\Gamma$ is complete. By minimality, $N$ meets each class of $\sigma$ in exactly one element, since removing a second element from a class would leave a proper subset of $N$ with the same support. Suppose $|\sigma| \geq 4$. As $N$ is a minimal nonface and $\Gamma$ is $2$-dimensional, this forces $|\sigma| = 4$ with all four triples in $\sigma$ facets of $\Gamma$, which goes against $\Gamma$ being quiet. Hence $|\sigma| = 3$ and $N$ has the stated form. Conversely, such a set $pqr$ is a nonface whose proper subsets have supports of cardinality at most $2$, hence are faces.

For (2), as above $|\supp(N)| \geq 3$. If $|\supp(N)| = 3$, then since $|N| \geq 4$ some class contains two elements $p \neq p'$ of $N$, and picking $q,r \in N$ from the other two classes, both $pqr$ and $p'qr$ are minimal nonfaces by (1). If $|\supp(N)| \geq 4$, then $N$ contains elements $p,q,r,s$ from four different classes. By quietness, at least two of the four triples of these classes are not faces of $\Gamma$, and the corresponding triples among $p,q,r,s$ are two distinct minimal nonfaces by (1).
\end{proof}

The second item says slightly more than austerity: not only the missing facets, but every missing face with at least four vertices, contains two minimal nonfaces.

\subsection{Lifting a decomposition of the base}
\label{subsec:lift}
We now show that $\mE_d(\Gamma_0)$ is $1$-decomposable for every $d \geq 3$. The idea of lifting structure from the base to the inflation one facet of $\Gamma$ at a time is from \cite[Theorem 3.12]{bs26}, where a shelling of $\Gamma$ is lifted to a shelling of the echo. We lift a $1$-decomposition instead, replacing each shedding face of the base by a bundle of shedding faces of the inflation. This needs the shedding faces of the base to obey a stricter condition.

\begin{definition}
\label{def:simply}
We call a pure $2$-dimensional complex $\Gamma$ \newword{simply $1$-decomposable} if $\Gamma$ is a simplex, or $\Gamma$ has a shedding face $F$ such that $\del_F{\Gamma}$ is simply $1$-decomposable and $F$ is either an edge contained in exactly one facet of $\Gamma$ or a vertex whose link is a tree.
\end{definition}

Since the link of a shedding vertex of a pure $2$-dimensional complex is a connected graph, the second condition only excludes cycles in the link.

\begin{example}
\label{ex:gamma0simply}
The complex $\Gamma_0$ is simply $1$-decomposable. Shedding the edges
\[
13, 46, 58, 45, 24, 23, 26
\]
in this order, each contained in exactly one facet of the current complex, removes the facets
\[
123, 456, 458, 245, 234, 236, 267
\]
and leaves the complex
\[
\langle 148, 348, 347, 368, 356, 168, 357, 147, 167, 156, 257, 125, 128, 278 \rangle.
\]
We then shed the vertices $7$, $2$, $1$, $4$, $5$ in this order. The link of each vertex in the current complex, and the complex that remains after shedding it, are as follows.
\[
\begin{array}{c|l|l}
\text{vertex} & \text{link} & \text{remaining complex} \\ \hline
7 & \langle 16,14,34,35,25,28 \rangle & \langle 148,348,368,356,168,156,125,128 \rangle \\
2 & \langle 15,18 \rangle & \langle 148,348,368,356,168,156 \rangle \\
1 & \langle 48,68,56 \rangle & \langle 348,368,356 \rangle \\
4 & \langle 38 \rangle & \langle 368,356 \rangle \\
5 & \langle 36 \rangle & \langle 368 \rangle
\end{array}
\]
Each link is a path, hence a tree, and we are left with the single facet $368$.
\end{example}

In this subsection we delete the elements of a class one at a time, which produces inflations in which one class is smaller than $d-1$, and the links that arise are inflations of cones. For a family of classes $\C$ and an element $p \in C_\ell$, we write $\C \setminus p$ for the family obtained from $\C$ by replacing $C_\ell$ with $C_\ell \setminus \{p\}$. We use without further comment that a skeleton of a simplex, and hence $\mE_t$ of a simplex, is vertex decomposable \cite{pb80}, and that an inflation $\mE^{\C}_t(\Gamma)$ is pure of dimension $t$ whenever the classes of every facet of $\Gamma$ have at least $t+1$ elements in total, since every face then extends to a face of cardinality $t+1$ with the same support.

\begin{lemma}
\label{lem:classvertex}
Let $t \geq 1$ and let $p \in C_\ell$ be a vertex of $\mE^{\C}_t(\Gamma)$. Then
\[
\lk_p{\mE^{\C}_t(\Gamma)} = \mE^{\C \setminus p}_{t-1}(\lk_\ell{\Gamma} * \ell) \qquad\text{and}\qquad \del_p{\mE^{\C}_t(\Gamma)} = \mE^{\C \setminus p}_t(\Gamma).
\]
\end{lemma}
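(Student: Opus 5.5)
The plan is to check both identities face by face, straight from the definitions of link, deletion, inflation and skeleton. The only structural input is that a face of $\mE^{\C}(\Gamma)$ is determined by its support, so every claim reduces to a statement about supports in $\Gamma$. Here $\lk_\ell{\Gamma} * \ell$ means the closed star of $\ell$ in $\Gamma$, that is, the sets $\tau$ with $\tau \cup \{\ell\} \in \Gamma$.

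For the deletion, a face $G$ of $\mE^{\C}_t(\Gamma)$ avoids $p$ exactly when it is a subset of $V \setminus \{p\}$ with $\supp(G) \in \Gamma$ and $|G| \leq t+1$. Taking supports with respect to $\C \setminus p$ gives the same support, since $G$ misses $p$ and meets $C_\ell \setminus \{p\}$ exactly when it meets $C_\ell$. The only subtlety is when $C_\ell = \{p\}$: then $\ell$ drops out of every support. A support $\sigma \in \Gamma$ with $\ell \notin \sigma$ is still a face of $\Gamma$, and conversely, so the two complexes agree. Hence $\del_p{\mE^{\C}_t(\Gamma)} = \mE^{\C \setminus p}_t(\Gamma)$.

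For the link, the faces of the left side are the sets $G$ with $p \notin G$, $|G| \leq t$, and $\supp(G \cup \{p\}) = \supp(G) \cup \{\ell\} \in \Gamma$. Now $\supp(G)$ computed in $\C$ equals the support computed in $\C \setminus p$, because $p \notin G$. The condition $\supp(G) \cup \{\ell\} \in \Gamma$ says exactly that $\supp(G) \in \lk_\ell{\Gamma} * \ell$: if $\ell \in \supp(G)$ this is the condition $\supp(G) \in \Gamma$ with $\ell$ in it, and if $\ell \notin \supp(G)$ it is $\supp(G) \in \lk_\ell{\Gamma}$. Combining this with $|G| \leq t$ gives the $(t-1)$-skeleton of $\mE^{\C \setminus p}(\lk_\ell{\Gamma} * \ell)$. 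The ground sets also match, namely $V \setminus \{p\}$, with vertices outside the star simply not being vertices.

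The main obstacle is bookkeeping rather than mathematics. We need to keep careful track of supports taken with respect to $\C$ versus $\C \setminus p$, and of the degenerate case where $C_\ell \setminus \{p\}$ is empty so that $\ell$ vanishes. The hypothesis that $p$ is a vertex ensures $\{\ell\} \in \Gamma$, so the star $\lk_\ell{\Gamma} * \ell$ is defined. The hypothesis $t \geq 1$ is not used by the face-by-face argument; it only keeps the right-hand side at skeleton index $t-1 \geq 0$.
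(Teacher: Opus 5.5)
Your proposal is correct and follows the paper's own argument: a direct face-by-face check. For sets avoiding $p$, the supports with respect to $\C$ and $\C \setminus p$ coincide, which gives the deletion; the link condition $|G| \le t$, $\supp(G) \cup \{\ell\} \in \Gamma$ is exactly membership in $\mE^{\C \setminus p}_{t-1}(\lk_\ell{\Gamma} * \ell)$. Your separate treatment of the case $C_\ell = \{p\}$ is harmless but not needed, since that equality of supports already covers it.
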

\begin{proof}
A set $F$ with $p \notin F$ lies in $\lk_p{\mE^{\C}_t(\Gamma)}$ if and only if $|F| \leq t$ and $\supp(F) \cup \{\ell\} \in \Gamma$, that is, $\supp(F) \in \lk_\ell{\Gamma} * \ell$. This gives the formula for the link. The faces of $\mE^{\C}_t(\Gamma)$ avoiding $p$ are those with $F \cap C_\ell \subseteq C_\ell \setminus \{p\}$, which gives the deletion.
\end{proof}

Recall that we identify a graph with the simplicial complex whose facets are its edges and its isolated vertices.

\begin{lemma}
\label{lem:treecone}
Let $\Delta$ be a tree with at least two vertices and let $\C$ be a family of classes for the cone $\Delta * a$ in which every vertex of $\Delta$ has a class of cardinality $d-1$ and the apex has an arbitrary class $A := C_a$. Then $\mE^{\C}_{d-1}(\Delta * a)$ is vertex decomposable.
\end{lemma}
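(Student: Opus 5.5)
We argue by induction on the number of vertices of the tree $\Delta$, shedding the elements of the class of a leaf one at a time. First we record the shape of the complex. The faces of $\Delta * a$ are the subsets of the sets $uva$ with $uv$ an edge of $\Delta$. Hence the faces of $\mE^{\C}_{d-1}(\Delta * a)$ are the subsets of cardinality at most $d$ of the sets $C_u \cup C_v \cup A$ with $uv$ an edge of $\Delta$. Since $|C_u \cup C_v| = 2d-2 \geq d$, this complex is pure of dimension $d-1$, and it stays pure as long as every edge has at least $d$ elements in its classes together with $A$.

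\emph{Base case.} If $\Delta$ is a single edge $\ell m$, then $\Delta * a$ is the simplex on $\ell m a$. So $\mE^{\C}_{d-1}(\Delta * a)$ is the $(d-1)$-skeleton of the simplex on $C_\ell \cup C_m \cup A$, which is vertex decomposable. The same holds whenever the class of $\ell$ has been shrunk to any size, which we use in the inductive step.

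\emph{Inductive step.} Suppose $\Delta$ has at least three vertices. Pick a leaf $\ell$ with neighbour $m$; then $m$ has another neighbour $w$. Write $C_\ell = \{p_1, \dots, p_{d-1}\}$ and let $\C_j$ be the family obtained from $\C$ by deleting $p_1, \dots, p_j$ from $C_\ell$. We show that $p_{j+1}$ is a shedding vertex of $\mE^{\C_j}_{d-1}(\Delta * a)$ for each $j$.

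By \cref{lem:classvertex}, the deletion of $p_{j+1}$ is $\mE^{\C_{j+1}}_{d-1}(\Delta * a)$. For $j+1 < d-1$ this is the next complex in the chain. For $j+1 = d-1$ the class of $\ell$ is empty, so the deletion equals $\mE_{d-1}((\Delta - \ell) * a)$ with the original classes. Here $\Delta - \ell$ is a tree with at least two vertices, so this complex is vertex decomposable by induction.

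Again by \cref{lem:classvertex}, the link of $p_{j+1}$ is $\mE^{\C_{j+1}}_{d-2}(\lk_\ell(\Delta * a) * \ell)$. Since $\ell$ is a leaf, $\lk_\ell(\Delta * a) = 2^{\{m,a\}}$, so the link is $\mE_{d-2}$ of the simplex on $\ell m a$. That is the $(d-2)$-skeleton of a simplex on at least $d-1$ elements, hence vertex decomposable.

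It remains to check gluability via \cref{lem:shedd}, and I expect this to be the only real obstacle. Let $G$ be a facet containing $p := p_{j+1}$. Then $G \subseteq C_\ell' \cup C_m \cup A$, where $C_\ell'$ is the current class of $\ell$. We need an element $x \notin G$ such that $(G \setminus \{p\}) \cup \{x\}$ is a face avoiding $p$.

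If $G \setminus \{p\} \not\supseteq C_m$, take $x \in C_m \setminus G$. Otherwise $|C_m| = d-1$ forces $G = C_m \cup \{p\}$. In that case take $x \in C_w$: the support of $C_m \cup \{x\}$ is the edge $mw$ of $\Delta$, so this set is a face, and it has cardinality $d$. This is exactly where $\Delta$ having at least three vertices is used, which is why the single-edge case is handled separately as the base case.

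With gluability, vertex decomposable deletion and vertex decomposable link, each $p_{j+1}$ is shedding. Running the chain from $j = d-2$ down to $j = 0$, as a downward induction on $j$, gives that $\mE^{\C}_{d-1}(\Delta * a)$ is vertex decomposable.
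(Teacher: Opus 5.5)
Your proof is correct and matches the paper's argument step for step. It inducts on the number of vertices of the tree and deletes the class of a leaf one element at a time via \cref{lem:classvertex}; the links are skeleta of simplices, and the last deletion is $\mE_{d-1}$ of the smaller tree cone, handled by induction. The only difference is in the gluability check, and it is cosmetic: you split on whether $C_m \subseteq G$, while the paper splits on whether $G$ exhausts $C'_\ell \cup C_m \cup A$. In both versions the leftover facet $C_m \cup \{p\}$ is handled with an element from the class of a second neighbour of $m$.
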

\begin{proof}
We induct on the number of vertices of $\Delta$. If $\Delta$ is a single edge $uw$, then $\Delta * a$ is a simplex and $\mE^{\C}_{d-1}(\Delta * a) = \skel{d-1}(2^{C_u \cup C_w \cup A})$. Otherwise let $\ell$ be a leaf with neighbor $w$. We delete the elements of $C_\ell$ one at a time. So let $\C'$ be $\C$ with $C_\ell$ replaced by a nonempty subset $L$, let $p \in L$, and let $\mE' := \mE^{\C'}_{d-1}(\Delta * a)$, which is pure of dimension $d-1$ as every facet of $\Delta * a$ contains a vertex of $\Delta$ other than $\ell$. By \cref{lem:classvertex}, $\del_p{\mE'} = \mE^{\C' \setminus p}_{d-1}(\Delta * a)$, and when $L = \{p\}$ this is $\mE^{\C}_{d-1}(\del_\ell{\Delta} * a)$, which is vertex decomposable by induction. By \cref{lem:classvertex} and $\lk_\ell(\Delta * a) = \langle wa \rangle$, the link $\lk_p{\mE'} = \mE^{\C' \setminus p}_{d-2}(\langle wa\ell \rangle) = \skel{d-2}(2^{C_w \cup (L \setminus \{p\}) \cup A})$ is vertex decomposable.

It remains to check that $p$ is gluable in $\mE'$. Let $F$ be a facet of $\mE'$ containing $p$. As $\ell$ is a leaf, $F \subseteq C_w \cup L \cup A$. If some $x \in C_w \cup L \cup A$ is not in $F$, then $(F \setminus \{p\}) \cup \{x\}$ is a facet of $\mE'$ avoiding $p$. Otherwise $|C_w \cup L \cup A| = d$, which forces $A = \emptyset$, $L = \{p\}$ and $F = C_w \cup \{p\}$, and then $C_w \cup \{x\}$ for any $x \in C_v$ is such a facet, where $v$ is a neighbor of $w$ other than $\ell$.
\end{proof}

\begin{lemma}[Vertex bundle]
\label{lem:vertexbundle}
Let $\Gamma$ be a pure $2$-dimensional complex and $a$ a gluable vertex of $\Gamma$ whose link $\Delta := \lk_a{\Gamma}$ is a tree. Let $\C$ be a family of classes for $\Gamma$ in which $C_a$ is nonempty and every other class has cardinality $d-1$. Then every $p \in C_a$ is a gluable vertex of $\mE^{\C}_d(\Gamma)$ with vertex decomposable link, and $\del_p{\mE^{\C}_d(\Gamma)} = \mE^{\C \setminus p}_d(\Gamma)$. In particular, deleting the elements of $C_a$ one at a time, in any order, takes $\mE_d(\Gamma)$ to $\mE_d(\del_a{\Gamma})$ through gluable vertices with vertex decomposable links.
\end{lemma}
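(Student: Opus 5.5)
The plan is to handle the three assertions about $p \in C_a$ separately: the deletion formula, the link, and gluability. The deletion formula $\del_p{\mE^{\C}_d(\Gamma)} = \mE^{\C \setminus p}_d(\Gamma)$ is immediate from \cref{lem:classvertex}. For the link, the same lemma gives
\[
\lk_p{\mE^{\C}_d(\Gamma)} = \mE^{\C \setminus p}_{d-1}(\lk_a{\Gamma} * a) = \mE^{\C \setminus p}_{d-1}(\Delta * a).
\]
This is an inflation of a cone over a tree. Every vertex of $\Delta$ keeps a class of size $d-1$, and the apex has the class $C_a \setminus \{p\}$, which may be empty. The tree $\Delta$ has at least two vertices, because $\Gamma$ is pure of dimension $2$ and so the link of $a$ is a nonempty pure graph with edges. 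Hence \cref{lem:treecone} applies directly and the link is vertex decomposable.

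The main work is gluability, which I would check with \cref{lem:shedd} applied to the vertex $p$. Let $F$ be a facet of $\mE^{\C}_d(\Gamma)$ containing $p$. Its support lies in some facet $auw$ of $\Gamma$ with $uw \in \Delta$. I need a facet $F'$ with $F \cap F' = F \setminus \{p\}$.

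The easy case is when $F \setminus \{p\} \subsetneq C_a \cup C_u \cup C_w \setminus \{p\}$ has room, i.e. some $x \in (C_a \cup C_u \cup C_w) \setminus F$ exists. Then $(F\setminus\{p\})\cup\{x\}$ is a face with support in $auw$. This swap works except when $x$ lies in $C_a$. In that exceptional case the new facet still contains elements of $C_a$, and that is still fine for the criterion, since the criterion only requires avoiding $p$. So the easy case is done.

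The hard case is when $F = C_a \cup C_u \cup C_w$ exactly. That needs $|C_a| + 2(d-1) = d+1$, so $|C_a| = 3-d \leq 0$ for $d\ge 3$. This contradicts $C_a \neq \emptyset$, so in fact there is always room. I should double-check this count, because the analogous case in \cref{lem:treecone} was the delicate one. If the count does leave a boundary case (for instance when a lower-dimensional support is involved), I would fall back on the gluability of $a$ in $\Gamma$. That gives a facet $u w b$ of $\Gamma$ with $b \neq a$, and I would replace $p$ by an element of $C_b$.

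The final sentence follows by iterating. After deleting some elements of $C_a$, the remaining class is still nonempty until the last deletion, so the hypotheses persist. When the last element is deleted, $\mE^{\C}_d(\Gamma)$ with $C_a=\emptyset$ equals $\mE_d(\del_a{\Gamma})$, since faces with support containing $a$ disappear. Each step passes through a gluable vertex with vertex decomposable link, as claimed. The order of deletion is irrelevant by symmetry within $C_a$.
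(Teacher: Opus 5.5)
Your proposal is correct and follows the paper's proof: the deletion and link come from \cref{lem:classvertex} and \cref{lem:treecone}, and gluability comes from the count $|(C_a\cup C_u\cup C_w)\setminus\{p\}| \geq 2(d-1) \geq d+1$. As you found, this count always leaves an element $x$ to swap in for $p$, so the ``hard case'' is empty and the hedge and fallback to gluability of $a$ in $\Gamma$ are never needed. The one point to add is that $\mE^{\C}_d(\Gamma)$ is pure of dimension $d$, which the gluing criterion \cref{lem:shedd} requires; the paper notes this because every facet of $\Gamma$ contains two vertices other than $a$.
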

\begin{proof}
The complex $\mE^{\C}_d(\Gamma)$ is pure of dimension $d$, as every facet of $\Gamma$ contains two vertices other than $a$. By \cref{lem:classvertex}, the link $\lk_p{\mE^{\C}_d(\Gamma)} = \mE^{\C \setminus p}_{d-1}(\Delta * a)$ is vertex decomposable by \cref{lem:treecone} (the tree $\Delta$ has at least two vertices as $\Gamma$ is pure of dimension $2$), and $\del_p{\mE^{\C}_d(\Gamma)} = \mE^{\C \setminus p}_d(\Gamma)$, which equals $\mE_d(\del_a{\Gamma})$ once $C_a = \{p\}$. For gluability, let $F$ be a facet of $\mE^{\C}_d(\Gamma)$ containing $p$ and $\sigma$ a facet of $\Gamma$ containing $\supp(F)$. The set $\bigcup_{v \in \sigma} C_v \setminus \{p\}$ has cardinality at least $2(d-1) \geq d+1$, so it contains some $x \notin F$, and $(F \setminus \{p\}) \cup \{x\}$ is a facet of $\mE^{\C}_d(\Gamma)$ avoiding $p$.
\end{proof}

\begin{lemma}[Edge bundle]
\label{lem:edgebundle}
Let $\Gamma$ be a pure $2$-dimensional complex and $ab$ a gluable edge of $\Gamma$ contained in exactly one facet $abc$. Let $B$ be the set of edges $pq$ of $\mE_d(\Gamma)$ with $p \in C_a$ and $q \in C_b$. For $P \subseteq B$, let $\Delta_P$ be the complex of faces of $\mE_d(\Gamma)$ containing no edge of $P$. Then every edge $pq \in B \setminus P$ is a gluable face of $\Delta_P$ with vertex decomposable link, and $\del_{pq}{\Delta_P} = \Delta_{P \cup \{pq\}}$. In particular, deleting the $(d-1)^2$ edges of $B$ one at a time, in any order, takes $\mE_d(\Gamma)$ to $\mE_d(\del_{ab}{\Gamma})$ through gluable edges with vertex decomposable links.
\end{lemma}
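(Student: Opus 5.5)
We argue by induction on $|P|$, in any order of $B$. We carry along the claim that $\Delta_P$ is pure of dimension $d$, which is needed to apply the gluing criterion \cref{lem:shedd}. For $P = \emptyset$ we have $\Delta_\emptyset = \mE_d(\Gamma)$, which is pure by \cref{lem:inflate} or the purity remark before \cref{lem:classvertex}. The identity $\del_{pq}{\Delta_P} = \Delta_{P \cup \{pq\}}$ is immediate from the definitions: a face of $\Delta_P$ avoids $pq$ exactly when it contains no edge of $P \cup \{pq\}$. Once we know $pq$ is gluable in $\Delta_P$, purity of the deletion is part of gluability, and this propagates purity to $\Delta_{P \cup \{pq\}}$. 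At the end, $\Delta_B$ consists of the faces of $\mE_d(\Gamma)$ whose support does not contain $ab$. These are exactly the faces of $\mE_d(\del_{ab}{\Gamma})$, since $\del_{ab}{\Gamma}$ is pure of dimension $2$ by gluability of $ab$.

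For gluability, take a facet $F$ of $\Delta_P$ containing $pq$ and $f \in \{p,q\}$, say $f = p$. Since $abc$ is the only facet of $\Gamma$ containing $ab$, we have $F \subseteq C_a \cup C_b \cup C_c$. If some $x \in C_c \setminus F$ exists, then $(F \setminus \{p\}) \cup \{x\}$ has support in $abc$. It creates no new edge between $C_a$ and $C_b$, so it is a facet of $\Delta_P$ meeting $F$ in $F \setminus \{p\}$. Otherwise $C_c \subseteq F$, and counting cardinalities forces $F = C_c \cup \{p,q\}$. Because $ab$ is gluable in $\Gamma$, \cref{lem:shedd} gives a facet $bcy$ of $\Gamma$ with $y \neq a$. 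We pick $y' \in C_y$. Then $C_c \cup \{q, y'\}$ has support $bcy \in \Gamma$ and contains no element of $C_a$, hence no edge of $B$. So it is the required adjacent facet. The case $f = q$ is symmetric, using a facet $acy$.

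For the link, let $C_a'$ be the set of $p' \in C_a \setminus \{p\}$ with $p'q \notin P$, and let $C_b'$ be the set of $q' \in C_b \setminus \{q\}$ with $pq' \notin P$. Let $K$ be the complex on $C_a' \sqcup C_b'$ of sets containing no edge of $P$. A set $G$ disjoint from $pq$ lies in the link exactly when $G \subseteq C_a' \cup C_b' \cup C_c$, $|G| \leq d-1$, and $G$ contains no edge of $P$. Hence
\[
\lk_{pq}{\Delta_P} = \skel{d-2}(K * C_c),
\]
which is the coning of $K$ by $C_c$ in dimension $d-2$. Since $|C_c| = d-1 = (d-2) - (-1)$, this complex is pure of dimension $d-2$. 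It is coned with respect to $C_c$ and trivially $(-1)$-full, so \cref{lem:conedfullVD} with $k = -1$ shows it is vertex decomposable.

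The main obstacle is the degenerate case $F = C_c \cup \{p,q\}$ in the gluability check. Here the adjacent facet must leave the support $abc$, and this is exactly where gluability of $ab$ in $\Gamma$ is used. Recognizing the link as a coning, so that \cref{lem:conedfullVD} applies, is the key simplification.
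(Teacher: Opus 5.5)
Your proof is correct and follows the paper's argument: the same induction on $|P|$ carrying purity, the same replacement of $p$ by some $x \in C_c \setminus F$, and the same identification $\lk_{pq}{\Delta_P} = \skel{d-2}(K * C_c)$ made vertex decomposable by \cref{lem:conedfullVD} with $k=-1$. The only difference is the degenerate case $F = C_c \cup \{p,q\}$. There the paper takes $C_c \cup \{q,q'\}$ with $q' \in C_b \setminus \{q\}$, which exists since $d \geq 3$. So your closing claim that the adjacent facet must leave the support $abc$ is false, and gluability of $ab$ in $\Gamma$ is not needed at that point; your detour through a facet $bcy$ of $\Gamma$ is valid but unnecessary.
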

\begin{proof}
We show by induction on $|P|$ that $\Delta_P$ is pure of dimension $d$, which holds for $\Delta_\emptyset = \mE_d(\Gamma)$. So assume that $\Delta_P$ is pure of dimension $d$ and let $pq \in B \setminus P$. The formula $\del_{pq}{\Delta_P} = \Delta_{P \cup \{pq\}}$ is clear from the definition.

We first check that $pq$ is gluable. Let $F \cup \{p,q\}$ be a facet of $\Delta_P$, so that $|F| = d-1$ and $\supp(F) \subseteq \{a,b,c\}$, as $abc$ is the only facet of $\Gamma$ containing $ab$. We remove $p$, the case of $q$ being symmetric. If some $x \in C_c$ is not in $F$, then $F \cup \{q,x\}$ is a face of $\mE_d(\Gamma)$, and every edge of $B$ that it contains is already contained in $F \cup \{p,q\}$, so $F \cup \{q,x\}$ is a facet of $\Delta_P$ avoiding $p$. Otherwise $F = C_c$, and $C_c \cup \{q,q'\}$ for any $q' \in C_b \setminus \{q\}$ is a facet of $\Delta_P$ avoiding $p$. Hence $pq$ is gluable and $\Delta_{P \cup \{pq\}}$ is pure of dimension $d$, which completes the induction.

Next we look at the link. A set $F$ disjoint from $\{p,q\}$ lies in $\lk_{pq}{\Delta_P}$ if and only if $|F| \leq d-1$, $\supp(F) \subseteq \{a,b,c\}$, and $F \cup \{p,q\}$ contains no edge of $P$. Hence $\lk_{pq}{\Delta_P} = \skel{d-2}(K * C_c)$, where $K$ is the complex of subsets $U$ of $(C_a \setminus \{p\}) \cup (C_b \setminus \{q\})$ such that $U \cup \{p,q\}$ contains no edge of $P$. This is a pure $(d-2)$-dimensional complex coned with respect to $C_c$, and $|C_c| = d-1 = (d-2)+1$, so it is vertex decomposable by \cref{lem:conedfullVD} with $k = -1$, whatever $K$ is.

Finally, once every edge of $B$ is deleted we are left with the faces $F$ of $\mE_d(\Gamma)$ with $\{a,b\} \not\subseteq \supp(F)$, that is, with $\mE_d(\del_{ab}{\Gamma})$.
\end{proof}

\begin{proposition}
\label{prop:inflate1dec}
If $\Gamma$ is simply $1$-decomposable, then $\mE_d(\Gamma)$ is $1$-decomposable for every $d \geq 3$. In particular, $\mE_d(\Gamma_0)$ is $1$-decomposable for every $d \geq 3$.
\end{proposition}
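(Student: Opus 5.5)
We argue by induction on the number of facets of $\Gamma$, following the recursive structure of \cref{def:simply}. The key point is that every shedding face in the sequences produced by \cref{lem:vertexbundle,lem:edgebundle} has dimension at most $1$. Each such face is gluable with vertex decomposable (hence $1$-decomposable) link. Consequently, $1$-decomposability of $\mE_d(\Gamma)$ propagates backwards along a chain of deletions ending at $\mE_d(\del_F{\Gamma})$.

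For the base case, $\Gamma$ is a simplex $abc$. Then $\mE_d(\Gamma) = \skel{d}(2^{C_a \cup C_b \cup C_c})$, which is a skeleton of a simplex and hence vertex decomposable \cite{pb80}.

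For the inductive step, let $F$ be a shedding face as in \cref{def:simply}, with $\del_F{\Gamma}$ simply $1$-decomposable. By induction, $\mE_d(\del_F{\Gamma})$ is $1$-decomposable. Note that $\del_F{\Gamma}$ is pure $2$-dimensional but may have fewer vertices, for instance when $F$ is a vertex. In that case $\mE_d(\del_F{\Gamma})$ is understood with $C_a$ emptied, which is exactly the terminal complex of \cref{lem:vertexbundle}. We then treat the two cases.

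If $F = a$ is a vertex whose link is a tree, list $C_a = \{p_1,\dots,p_{d-1}\}$ and set $\mE^{(j)} := \mE^{\C^{(j)}}_d(\Gamma)$, where $\C^{(j)}$ replaces $C_a$ by $\{p_{j+1},\dots,p_{d-1}\}$. By \cref{lem:vertexbundle}, $p_{j+1}$ is a gluable vertex of $\mE^{(j)}$ for $j < d-1$. Its deletion is $\mE^{(j+1)}$ and its link is vertex decomposable. The vertex $a$ is gluable in $\Gamma$ because it is a shedding face. The hypothesis of \cref{lem:vertexbundle} also needs every other class to have size $d-1$, which holds. Now induct downward on $j$. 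The complex $\mE^{(d-1)} = \mE_d(\del_a{\Gamma})$ is $1$-decomposable. If $\mE^{(j+1)}$ is $1$-decomposable, then $p_{j+1}$ is a shedding face of $\mE^{(j)}$ of dimension $0 \le 1$, with $1$-decomposable deletion and link. Hence $\mE^{(j)}$ is $1$-decomposable, and $j = 0$ gives $\mE_d(\Gamma)$.

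If $F = ab$ is an edge in exactly one facet, enumerate $B = \{e_1,\dots,e_{(d-1)^2}\}$ and use the complexes $\Delta_{\{e_1,\dots,e_j\}}$ of \cref{lem:edgebundle}. That lemma shows these complexes are pure $d$-dimensional. Each $e_{j+1}$ is a gluable edge, of dimension $1 \le 1$, with vertex decomposable link and deletion $\Delta_{\{e_1,\dots,e_{j+1}\}}$. The chain ends at $\mE_d(\del_{ab}{\Gamma})$, and the same downward induction applies.

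The only subtle point is bookkeeping. The intermediate complexes must be exactly those covered by the lemmas: in the vertex case, the classes other than $C_a$ keep size $d-1$ throughout, and we need to know that $\mE_d(\del_a\Gamma)$ with $C_a$ removed is the same complex that the induction hypothesis handles. Both lemmas already state this identification, so the argument is essentially assembly.

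Finally, $\Gamma_0$ is simply $1$-decomposable by \cref{ex:gamma0simply}. Applying the result to it gives that $\mE_d(\Gamma_0)$ is $1$-decomposable for every $d \ge 3$.
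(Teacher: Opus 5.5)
Your proof is correct and follows the paper's argument. You induct on the number of facets of $\Gamma$ and use the vertex and edge bundle lemmas to get a chain of gluable faces of dimension at most $1$, with vertex decomposable links, from $\mE_d(\Gamma)$ down to $\mE_d(\del_F{\Gamma})$, then reverse that chain. Your explicit downward induction and your identification of the emptied class with $\mE_d(\del_a{\Gamma})$ only spell out steps the paper leaves implicit.
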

\begin{proof}
We induct on the number of facets of $\Gamma$. If $\Gamma$ is a simplex $abc$, then $\mE_d(\Gamma) = \skel{d}(2^{C_a \cup C_b \cup C_c})$ is vertex decomposable. Otherwise let $F$ be a shedding face as in \cref{def:simply}. \Cref{lem:vertexbundle,lem:edgebundle} provide a sequence of gluable faces of dimension at most $1$ with vertex decomposable links that takes $\mE_d(\Gamma)$ to $\mE_d(\del_F{\Gamma})$. The latter is $1$-decomposable by induction, and going back along the sequence, each face is then a shedding face of a $1$-decomposable complex. The last statement follows from \cref{ex:gamma0simply}.
\end{proof}

The proof is constructive and produces an explicit $1$-decomposition of $\mE_d(\Gamma_0)$ from the twelve steps of \cref{ex:gamma0simply}.

\subsection{Tightness}
\label{subsec:tight}

\begin{theorem}
\label{thm:tight}
Let $d \geq 3$ and let $\mE := \mE_d(\Gamma_0)$ with vertex set $V$ of cardinality $8(d-1)$. Let $A$ be a set of at most $d-3$ new vertices. Then there is no ordering $F_1,\dots,F_t$ of the facets of $\Sk{d}{V \sqcup A} \setminus \mE$ such that $\mE + \langle F_1,\dots,F_i \rangle$ is shellable for all $1 \leq i \leq t$. In particular, for every $k \geq 1$, the complex $\Sk{d}{n+d-2}$ in \cref{thm:main} cannot be replaced by $\Sk{d}{n+d-3}$.
\end{theorem}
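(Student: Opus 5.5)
The plan is to show that no ordering can get past the first moment a facet with a "bad" trace on $V$ is added. Every facet of $\Sk{d}{V \sqcup A} \setminus \mE$ has the form $F = X \sqcup S$ with $X \subseteq V$ and $S \subseteq A$. Since $|S| \leq |A| \leq d-3$ and $|F| = d+1$, we always have $|X| \geq 4$, and of course $|X| \leq d+1$. Call $F$ \emph{bad} if $X$ is not a face of $\mE$.

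First I check that bad facets exist. Since $\Gamma_0$ is not the full $2$-skeleton on $[8]$, there is a triple $abc \notin \Gamma_0$. Any $(d+1)$-subset of $V$ meeting $C_a$, $C_b$ and $C_c$ is then not a face of $\mE$. Such a subset is a facet of $\Sk{d}{V \sqcup A} \setminus \mE$ with $S = \emptyset$, so it is bad. Hence in any ordering $F_1,\dots,F_t$ there is a first bad facet $F_j = X \sqcup S$.

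Let $\Delta := \mE + \langle F_1,\dots,F_{j-1} \rangle$. This complex is pure of dimension $d$.

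Key step: the restriction of $\Delta$ to $V$ equals $\mE$. Each earlier $F_i = X_i \sqcup S_i$ is not bad, so $X_i \in \mE$. Every face of $F_i$ lying in $V$ is a subset of $X_i$, and is therefore already a face of $\mE$. Consequently, a subset of $V$ is a minimal nonface of $\Delta$ if and only if it is a minimal nonface of $\mE$.

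Now $X$ is a nonface of $\mE$ with $4 \leq |X| \leq d+1$. By \cref{lem:inflate}(2), $X$ contains two distinct minimal nonfaces $N_1, N_2$ of $\mE$. By the key step these are minimal nonfaces of $\Delta$, and both are contained in $F_j$. Since $F_j$ is not a face of $\Delta$ (it contains $N_1$), \cref{lem:twononfaces} shows that $\Delta + F_j = \mE + \langle F_1,\dots,F_j \rangle$ is not shellable. This contradicts the assumption on the ordering. The case $j=1$ is covered, since then $\Delta = \mE$.

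For the final sentence, take $n = 8(d-1)$ and relabel $V$ as $[n]$. By \cref{prop:inflate1dec}, $\mE$ is $1$-decomposable, hence $k$-decomposable for every $k \geq 1$. Any extension maintaining $k$-decomposability would in particular maintain shellability, since $k$-decomposable complexes are shellable. Taking $|A| = d-3$ then rules out $\Sk{d}{n+d-3}$.

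The main obstacle is noticing that one cannot simply argue about the first added facet. A facet $X \sqcup \{a\}$ with $X \in \mE$ contains only the single minimal nonface $\{a\}$ and may well be addable. The key step above, that facets with good traces never change the restriction to $V$, is what bypasses this.
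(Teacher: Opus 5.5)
Your proposal is correct and follows the paper's proof essentially step for step. Your ``bad'' facets are exactly the facets outside the coning $\skel{d}(\mE * A)$. Like you, the paper takes the first such facet, notes that the prefix complex has the same faces inside $V$ as $\mE$, and then applies \cref{lem:inflate}(2) and \cref{lem:twononfaces}.
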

\begin{proof}
Let $\Delta' := \skel{d}(\mE * A)$ be the coning of $\mE$ by $A$. Since $|A| \leq d-3$, every facet $F$ of $\Sk{d}{V \sqcup A}$ has $|F \cap V| \geq 4$, and $F \in \Delta'$ if and only if $F \cap V \in \mE$.

We claim that if $\Delta$ is a pure complex with $\mE \subseteq \Delta \subseteq \Delta'$ and $F$ is a facet of $\Sk{d}{V \sqcup A}$ not in $\Delta'$, then $\Delta + F$ is not shellable. Indeed, $T := F \cap V$ is not a face of $\mE$ and $4 \leq |T| \leq d+1$, so by \cref{lem:inflate} $T$ contains two distinct minimal nonfaces $N_1,N_2$ of $\mE$. The faces of $\Delta'$ contained in $V$ are the faces of $\mE$, so $N_1,N_2$ are minimal nonfaces of $\Delta$, and \cref{lem:twononfaces} gives the claim.

Assume for the sake of contradiction that an ordering $F_1,\dots,F_t$ as in the statement exists. Not every facet of $\Sk{d}{V \sqcup A}$ lies in $\Delta'$, since $\Gamma_0$ is not $\Sk{2}{8}$, so let $F_i$ be the first facet in the ordering that is not in $\Delta'$. Then $\Delta := \mE + \langle F_1,\dots,F_{i-1} \rangle$ lies between $\mE$ and $\Delta'$, and $\Delta + F_i$ is not shellable by the claim proven in the previous paragraph, a contradiction. The last statement follows since $\mE$ is $1$-decomposable by \cref{prop:inflate1dec}.
\end{proof}

For $d = 3$ no new vertices are allowed, and \cref{thm:tight} says that the $280$-facet complex of \cite{bs26} is a $1$-decomposable complex on $16$ vertices that cannot be extended to $\Sk{3}{16}$ one facet at a time even while only maintaining shellability. This answers Question 5.1 of \cite{ghkklotw25} in the negative.

\section*{Acknowledgments}
The author thanks the participants of the 2021 REU hosted at Texas State University, Russell Barnes, Anton Dochtermann, Fran Herr, Cece Henderson and Ethan Partida, for the discussions that led to the idea of coning used in \cite{ghkklotw25} and in this paper. The author also thanks Benjamin Keller and Ryan Tang of the 2025 Honors Summer Math Camp group for the discussions that led to the idea of introducing new vertices, also used in \cite{ghkklotw25}. Finally, the author thanks the 2025 Honors Summer Math Camp at Texas State University for hosting, where many of the ideas in this paper originated.

\section*{Use of AI tools}
The author used two AI tools in preparing this paper: Claude (Anthropic, mainly the model Fable 5.1) and ChatGPT (OpenAI, mainly the model GPT-6 Astra). Below the author describes what each contributed, including to the mathematical reasoning.

AI contributed to the mathematical reasoning in \cref{subsec:lift}. ChatGPT found an explicit $1$-decomposition of the $280$-facet counterexample of Bolognini and Sentinelli \cite{bs26} by computer. This certificate and the code verifying it are available at \cite{o26code}, and are not needed for the results of this paper, since \cref{prop:inflate1dec} is proved by hand. Upon the author's request to use this counterexample to check whether the bound of $d-2$ new vertices in \cite{ghkklotw25} is tight, ChatGPT found the arguments of \cref{subsec:lift}, including the twelve-step decomposition of $\Gamma_0$ in \cref{ex:gamma0simply}.

All statements and proofs outside \cref{subsec:lift} are the author's. Claude read the proofs and pointed out gaps and missing cases, which the author closed, after which Claude was used to optimize and reword them. Claude drafted the manuscript, including the examples. The whole text was revised, edited and checked line by line by the author.

The author provided the main ideas of this paper: the coning operation and the notion of a $k$-full complex, the strategy of adding triangles to the base of a coned complex and finishing it off with vertex decomposability, and the idea of using the Bolognini--Sentinelli counterexample to show that the bound of $d-2$ new vertices from \cite{ghkklotw25} is tight.

The author checked all mathematical content and all computational results, and takes full responsibility for the paper.

\printbibliography

@article{bpz19,
 author = {Bigdeli, Mina and Yazdan Pour, Ali Akbar and Zaare-Nahandi, Rashid},
 title = {Decomposable clutters and a generalization of {Simon}'s conjecture},
 journal = {J. Algebra},
 volume = {531},
 pages = {102--124},
 year = {2019},
 doi = {10.1016/j.jalgebra.2019.03.037}
}

@article{bw96,
 author = {Bj{\"o}rner, Anders and Wachs, Michelle L.},
 title = {Shellable nonpure complexes and posets. {I}},
 journal = {Trans. Amer. Math. Soc.},
 volume = {348},
 number = {4},
 pages = {1299--1327},
 year = {1996},
 doi = {10.1090/S0002-9947-96-01534-6}
}

@article{be94,
 author = {Bj{\"o}rner, Anders and Eriksson, Kimmo},
 title = {Extendable shellability for rank 3 matroid complexes},
 journal = {Discrete Math.},
 volume = {132},
 number = {1-3},
 pages = {373--376},
 year = {1994},
 doi = {10.1016/0012-365X(94)90246-1}
}

@article{bm72,
 author = {Bruggesser, H. and Mani, P.},
 title = {Shellable decompositions of cells and spheres},
 journal = {Math. Scand.},
 volume = {29},
 pages = {197--205},
 year = {1972},
 doi = {10.7146/math.scand.a-11045}
}

@article{cdgo22,
 author = {Coleman, Michaela and Dochtermann, Anton and Geist, Nathan and Oh, Suho},
 title = {Completing and extending shellings of vertex decomposable complexes},
 journal = {SIAM J. Discrete Math.},
 volume = {36},
 number = {2},
 pages = {1291--1305},
 year = {2022},
 doi = {10.1137/21M1445119}
}

@article{cdgs20,
 author = {Culbertson, Jared and Dochtermann, Anton and Guralnik, Dan P. and Stiller, Peter F.},
 title = {Extendable shellability for $d$-dimensional complexes on $d+3$ vertices},
 journal = {Electron. J. Combin.},
 volume = {27},
 number = {3},
 pages = {Paper No. 3.46, 8},
 year = {2020},
 doi = {10.37236/9120}
}

@article{dk78,
 author = {Danaraj, Gopal and Klee, Victor},
 title = {Which spheres are shellable?},
 journal = {Ann. Discrete Math.},
 volume = {2},
 pages = {33--52},
 year = {1978},
 doi = {10.1016/S0167-5060(08)70320-0}
}

@article{d21,
 author = {Dochtermann, Anton},
 title = {Exposed circuits, linear quotients, and chordal clutters},
 journal = {J. Combin. Theory Ser. A},
 volume = {177},
 pages = {Paper No. 105327, 22},
 year = {2021},
 doi = {10.1016/j.jcta.2020.105327}
}

@misc{k77,
  title={Untersuchungen zur Struktur geometrischer Zellkomplexe insbesondere zur Schalbarkeit von pl-Sph{\"a}ren und pl-Kugeln},
  author={Kleinschmidt, P.},
  howpublished={Habilitationsschrift, Bochum},
  year={1977}
}

@article{mt03,
 author = {Moriyama, Sonoko and Takeuchi, Fumihiko},
 title = {Incremental construction properties in dimension two---shellability, extendable shellability and vertex decomposability},
 journal = {Discrete Math.},
 volume = {263},
 number = {1-3},
 pages = {295--296},
 year = {2003},
 doi = {10.1016/S0012-365X(02)00771-9}
}

@article{pb80,
 author = {Provan, J. Scott and Billera, Louis J.},
 title = {Decompositions of simplicial complexes related to diameters of convex polyhedra},
 journal = {Math. Oper. Res.},
 volume = {5},
 pages = {576--594},
 year = {1980},
 doi = {10.1287/moor.5.4.576}
}

@article{s94,
 author = {Simon, Robert Samuel},
 title = {Combinatorial properties of ``cleanness''},
 journal = {J. Algebra},
 volume = {167},
 number = {2},
 pages = {361--388},
 year = {1994},
 doi = {10.1006/jabr.1994.1191}
}

@article{w99,
 author = {Wachs, M. L.},
 title = {Obstructions to shellability},
 journal = {Discrete Comput. Geom.},
 volume = {22},
 number = {1},
 pages = {95--103},
 year = {1999},
 doi = {10.1007/PL00009450}
}

@article{z98,
 author = {Ziegler, G. M.},
 title = {Shelling polyhedral 3-balls and 4-polytopes},
 journal = {Discrete Comput. Geom.},
 volume = {19},
 number = {2},
 pages = {159--174},
 year = {1998},
 doi = {10.1007/PL00009339}
}

@book{j08,
 author = {Jonsson, Jakob},
 title = {Simplicial complexes of graphs},
 series = {Lecture Notes in Mathematics},
 volume = {1928},
 year = {2008},
 publisher = {Springer, Berlin},
 doi = {10.1007/978-3-540-75859-4}
}

@incollection{bj92,
 author = {Bj{\"o}rner, Anders},
 title = {The homology and shellability of matroids and geometric lattices},
 booktitle = {Matroid applications},
 pages = {226--283},
 year = {1992},
 publisher = {Cambridge Univ. Press, Cambridge}
}

@phdthesis{Hachimori2000,
  author = {Hachimori, Masahiro},
  title  = {Combinatorics of constructible complexes},
  school = {University of Tokyo},
  year   = {2000}
}

@article{Woodroofe2011,
  author  = {Woodroofe, Russ},
  title   = {Chordal and sequentially {C}ohen--{M}acaulay clutters},
  journal = {Electron. J. Combin.},
  volume  = {18},
  number  = {1},
  year    = {2011},
  pages   = {Paper No. 208, 20},
  doi     = {10.37236/695}
}

@article{AdiprasitoBenedettLutz2017,
  author  = {Adiprasito, Karim A. and Benedetti, Bruno and Lutz, Frank H.},
  title   = {Extremal examples of collapsible complexes and random discrete {M}orse theory},
  journal = {Discrete Comput. Geom.},
  volume  = {57},
  number  = {4},
  year    = {2017},
  pages   = {824--853},
  doi     = {10.1007/s00454-017-9860-4}
}

@book{Stanley96,
  author    = {Stanley, Richard P.},
  title     = {Combinatorics and Commutative Algebra},
  series    = {Progress in Mathematics},
  volume    = {41},
  edition   = {2nd},
  publisher = {Birkh{\"a}user Boston},
  year      = {1996}
}

@misc{bs26,
  author       = {Bolognini, Davide and Sentinelli, Paolo},
  title        = {Non-extendably shellable skeleta of simplices},
  year         = {2026},
  howpublished = {Preprint, arXiv:2605.24732},
  note         = {\url{https://arxiv.org/abs/2605.24732}}
}

@article{bb21,
 author = {Benedetti, Bruno and Bolognini, Davide},
 title = {Non-ridge-chordal complexes whose clique complex has shellable {Alexander} dual},
 journal = {J. Combin. Theory Ser. A},
 volume = {180},
 pages = {Paper No. 105430, 9},
 year = {2021},
 doi = {10.1016/j.jcta.2021.105430}
}

@article{dgl22,
 author = {Doolittle, Joseph and Goeckner, Bennet and Lazar, Alexander},
 title = {Partition and {Cohen--Macaulay} extenders},
 journal = {European J. Combin.},
 volume = {102},
 pages = {Paper No. 103488},
 year = {2022},
 doi = {10.1016/j.ejc.2021.103488}
}

@article{f25,
 author = {Ficarra, Antonino},
 title = {Simon conjecture and the v-number of monomial ideals},
 journal = {Collect. Math.},
 volume = {76},
 number = {3},
 pages = {477--492},
 year = {2025},
 doi = {10.1007/s13348-024-00441-z}
}

@misc{ghkklotw25,
  author       = {Ghosal, Rhea and Han, Melody and Keller, Benjamin and Kerr, Scarlett and Liu, Justin and Oh, Suho and Tang, Ryan and Weng, Chloe},
  title        = {Extendability of $1$-decomposable complexes},
  year         = {2025},
  howpublished = {Preprint, arXiv:2508.04555},
  note         = {\url{https://arxiv.org/abs/2508.04555}}
}

@phdthesis{hall04,
  author = {Hall, H. Tracy},
  title  = {Counterexamples in Discrete Geometry},
  school = {University of California, Berkeley},
  year   = {2004}
}

@article{e96,
  author  = {Eriksson, Kimmo},
  title   = {Strong convergence and the polygon property of 1-player games},
  journal = {Discrete Math.},
  volume  = {153},
  number  = {1-3},
  pages   = {105--122},
  year    = {1996},
  doi     = {10.1016/0012-365X(95)00131-F}
}

@book{z95,
  author    = {Ziegler, G{\"u}nter M.},
  title     = {Lectures on Polytopes},
  series    = {Graduate Texts in Mathematics},
  volume    = {152},
  publisher = {Springer-Verlag},
  address   = {New York},
  year      = {1995}
}

@misc{o26code,
  author       = {Oh, Suho},
  title        = {Code for ``{C}ompleting shellings with $d-2$ extra vertices''},
  year         = {2026},
  howpublished = {Google Colab notebook},
  note         = {\url{https://colab.research.google.com/drive/15qYmoXz5DbeWIWaTSOylSCz_KdA7i6Vm}}
}

\end{document}